\newfont{\cyr}{wncyr10} 
\theoremstyle{plain}
\newtheorem{theorem}{Theorem}[section]
\newtheorem{lemma}[theorem]{Lemma}
\newtheorem{conjecture}[theorem]{Conjecture}
\newtheorem{proposition}[theorem]{Proposition}
\newtheorem{corollary}[theorem]{Corollary}
\newtheorem*{proposition*}{Proposition}
\theoremstyle{definition}
\newtheorem{remark}[theorem]{Remark}
\newtheorem{definition}[theorem]{Definition}
\newtheorem{example}[theorem]{Example}
\newtheorem{condition}[theorem]{Property}
\theoremstyle{plain}
\numberwithin{equation}{section}
\newcommand{\fA}{{\mathfrak A}}
\newcommand{\fb}{{\mathfrak b}}
\newcommand{\fc}{{\mathfrak c}}
\newcommand{\cF}{{\mathcal F}}
\newcommand{\cJ}{{\mathcal J}}
\newcommand{\1}{{\mathbf 1}}  
\newcommand{\vphi}{{\varphi}}
\newcommand{\vpi}{{\varpi}}
\newcommand{\cL}{{\mathcal L}}
\newcommand{\cN}{{\mathcal N}}
\newcommand{\fp}{{\mathfrak p}}
\renewcommand{\vpi}{{\varpi}}
\newcommand{\bQ}{{\mathbf Q}}
\newcommand{\fq}{{\mathfrak q}}
\newcommand{\br}{{\mathbf r}}
\newcommand{\wt}[1]{{\widetilde{#1}}}
\newcommand{\bZ}{{\mathbf Z}}
\DeclareMathOperator{\Cl}{Cl}
\DeclareMathOperator{\Coker}{Coker}
\DeclareMathOperator{\Det}{Det}
\DeclareMathOperator{\Gal}{Gal}
\DeclareMathOperator{\Hom}{Hom}
\DeclareMathOperator{\Ind}{Ind}
\DeclareMathOperator{\Irr}{Irr}
\DeclareMathOperator{\Ker}{Ker}
\DeclareMathOperator{\Loc}{Loc}
\DeclareMathOperator{\loc}{loc}
\DeclareMathOperator{\Map}{Map}
\DeclareMathOperator{\nr}{nr}
\DeclareMathOperator{\nrd}{nrd}
\DeclareMathOperator{\Ram}{Ram}
\DeclareMathOperator{\sgn}{sgn}
\DeclareMathOperator{\Symp}{Symp}
\newcommand{\mf}[1]{\mathfrak{#1}}
\newcommand{\Z}{{\bf Z}}
\newcommand{\Q}{{\bf Q}}
\newcommand{\iso}{\xrightarrow{\sim}}
\newcommand{\be}{\begin{equation}}
\newcommand{\ee}{\end{equation}}
\newcommand{\lab}[1]{\label{#1}}
\begin{document}
\title[Inverse different]{On the square root of the inverse different} 
\author{A. Agboola, D. Burns, L. Caputo \and Y. Kuang}
\date{Version of October 4, 2022}
\address{A. A.: Department of Mathematics \\
University of California \\ Santa Barbara, CA 93106.}
\email{agboola@math.ucsb.edu} 
\address{D. B.: King's College London\\
Department of Mathematics\\
London WC2R 2LS.}
\email{david.burns@kcl.ac.uk}
\address{L. C.: Madrid, Spain.}
\email{luca.caputo@gmx.com}
\address{Y. K.: Zhuhai, China}
\email{yu.kuang.3@gmail.com}
  
\begin{abstract}
Let $F_{\pi}$ be a finite, Galois-algebra extension of a number field
$F$, with group $G$. Suppose that $F_{\pi}/F$ is weakly ramified, and
that the square root $A_\pi$ of the inverse different
$\mf{D}_{\pi}^{-1}$ is defined. (This latter condition holds if, for
example, $|G|$ is odd.) B. Erez has conjectured that the
class $(A_\pi)$ of $A_\pi$ in the locally free class group $\Cl(\Z G)$
of $\Z G$ is equal to the Cassou-Nogu\`es-Fr\"ohlich root number class
$W(F_{\pi}/F)$ associated to $F_\pi/F$. This conjecture has been
verified in many cases. We establish a precise formula
for $(A_\pi)$ in terms of $W(F_{\pi}/F)$ in all cases that $A_\pi$ is
defined and $F_\pi/F$ is tame, and are thereby able to deduce that, in
general, $(A_\pi)$ is not equal to $W(F_\pi/F)$.
\end{abstract}

\maketitle


\tableofcontents


\section{Introduction} \lab{S:intro}

Let $G$ be a finite group, and suppose that $F_{\pi}/F$ is a
$G$-Galois algebra extension of number fields. Write
$\mf{D}_\pi$ for the different of $F_{\pi}/F$ and
$O_{\pi}$ for the ring of integers of $F_{\pi}$. If $\mf{P}$ is any
prime of $O_{\pi}$, the power $v_{\mf{P}}(\mf{D}_{\pi})$ of
$\mf{P}$ occurring in $\mf{D}_\pi$ is given by
\[
v_{\mf{P}}(\mf{D}_\pi) = \sum_{i = 0}^{\infty} \left(|G^{(i)}_{\mf{P}}| -
1\right),
\]
where $G^{(i)}_{\mf{P}}$ denotes the $i$-th ramification group at
$\mf{P}$ (see \cite[Chapter IV, Proposition 4]{Se79}). This implies
that if, for example, $|G|$ is odd, then the inverse different
$\mf{D}_{\pi}^{-1}$ has a square root, i.e.  there exists a unique
fractional ideal $A_\pi$ of $O_{\pi}$ such that
\[
A_{\pi}^{2} = \mf{D}_{\pi}^{-1}.
\]
(Let us remark at once that if $|G|$ is even, then
$\mf{D}_{\pi}^{-1}$ may well---but of course need not---also
have a square root.)

Recall that $F_{\pi}/F$ is said to be \textit{weakly ramified} if
$G^{(2)}_{\mf{P}} = 0$ for all prime ideals $\mf{P}$ of $O_{\pi}$. B. Erez
has shown that $F_{\pi}/F$ is weakly ramified if and only if $A_\pi$ is a
locally free $O_FG$-module (see \cite[Theorem 1]{E91}). Hence, if
$F_{\pi}/F$ is weakly ramified, it follows that $A_\pi$ is a locally free
$\bZ G$-module, and so defines an element $(A_\pi)$ in the locally
free class group $\Cl(\Z G)$ of $\Z G$. The following result is due to
Erez (see \cite[Theorem 3]{E91}).

\begin{theorem} \label{T:boas}
Suppose that $F_{\pi}/F$ is tamely ramified, and that $|G|$ is odd. Then
$A_\pi$ is a free $\Z G$-module. \qed
\end{theorem}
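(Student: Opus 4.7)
The plan is to exploit the self-duality of $A(\pi)$ with respect to the trace form, then reduce the triviality of the class $(A(\pi)) \in \Cl(\Z G)$ to the vanishing of symplectic Artin root numbers, which is automatic when $|G|$ is odd.

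First, $A(\pi)$ is indeed defined: when $|G|$ is odd each $|G^{(i)}_{\mf{P}}| - 1$ in Serre's formula is even. Tameness implies weak ramification, so by the theorem of Erez already cited, $A(\pi)$ is locally free over $O_F G$ and a fortiori over $\Z G$. To obtain freeness from $(A(\pi)) = 0$ one invokes Jacobinski's cancellation theorem: since $|G|$ is odd, $G$ has no non-trivial symplectic characters, so $\Q G$ satisfies the Eichler condition and stably free $\Z G$-lattices of positive rank are free.

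The main structural input is self-duality. The trace form $\langle x, y \rangle := \Tr_{F_\pi/F}(xy)$ is a non-degenerate $G$-invariant $F$-valued symmetric bilinear form on $F_\pi$, and the very definition of $A(\pi)$ yields
\[
\Tr_{F_\pi/F}\bigl(A(\pi) \cdot A(\pi)\bigr) = \Tr_{F_\pi/F}\bigl(\mf{D}(\pi)^{-1}\bigr) = O_F,
\]
with the restricted pairing unimodular. Hence $A(\pi) \iso \Hom_{O_F}(A(\pi), O_F)$ as $O_F G$-modules, placing $(A(\pi))$ in the subgroup of self-dual classes of $\Cl(\Z G)$.

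I would then compute $(A(\pi))$ via Fr\"ohlich's Hom-description, representing it by an adelic character function $\chi \mapsto \prod_\mf{p} (a_\mf{p} \mid \chi)$, with $a_\mf{p}$ a local $O_{F,\mf{p}} G$-generator of $A(\pi)_{\mf{p}}$. Because $2$ is invertible in $\Z_p G$ at every $p$ dividing $|G|$, one can arrange each $a_\mf{p}$ to be \emph{self-dual} with respect to the trace form, meaning $\langle \sigma a_\mf{p}, a_\mf{p} \rangle = \delta_{\sigma, 1}$ for all $\sigma \in G$; this is a local analogue of Erez's self-dual normal basis theorem. Comparing such an $a_\mf{p}$ with a Noether generator of $O_{\pi,\mf{p}}$ over $O_{F,\mf{p}} G$ expresses the resolvent $(a_\mf{p} \mid \chi)$ in terms of local Galois--Gauss sums. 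The resulting global cocycle for $(A(\pi))$ is thereby related to the Galois--Gauss-sum cocycle that, by Taylor's proof of Fr\"ohlich's conjecture, represents the class $(O_\pi)$ and is measured by the symplectic Artin root numbers of $F_\pi/F$. When $|G|$ is odd, no non-trivial symplectic characters of $G$ exist, the symplectic root numbers all vanish, the Gauss-sum cocycle is a $\Det$ of a local unit, and this triviality transfers to the cocycle for $(A(\pi))$, giving $(A(\pi)) = 0$.

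The main obstacle I anticipate is the local construction above: producing the self-dual normal basis generators $a_\mf{p}$ and identifying their resolvents with the correct combinations of local Galois--Gauss sums, so that the comparison with Taylor's theorem can be effected. This is where the hypothesis $|G|$ odd is most essentially used --- both to secure the existence of self-dual local generators (via $2 \in (\Z_p G)^*$) and to ensure that the Gauss-sum cocycle which ultimately controls $(A(\pi))$ is already trivial. Once the local computation is in hand, Fr\"ohlich's Hom-description, Taylor's theorem, and Jacobinski cancellation combine to yield freeness of $A(\pi)$.
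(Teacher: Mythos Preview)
The paper does not give its own proof of this statement: Theorem~\ref{T:boas} is quoted as a known result of Erez, with a citation to \cite[Theorem 3]{E91} and a bare \qed. Your proposal is, in outline, Erez's original strategy: exploit the unimodular self-duality of $A(\pi)$ under the trace form, use Fr\"ohlich's Hom-description with self-dual local generators, compare the resulting resolvents with Galois--Gauss sums, and conclude $(A(\pi))=0$ via Taylor's theorem together with the absence of non-trivial symplectic characters when $|G|$ is odd; Jacobinski cancellation then upgrades this to freeness. As a sketch of Erez's method this is accurate, and you are right that the delicate point is the local identification of the self-dual resolvents with the appropriate Gauss-sum expressions.

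It is worth noting that the paper's main theorem (Theorem~\ref{T:tboas}) yields an \emph{alternative} proof of Theorem~\ref{T:boas} by a genuinely different mechanism. Rather than self-duality, the paper computes the difference class $\fc(\pi)=[A(\pi),O_FG;\br_G]-[O_\pi,O_FG;\br_G]$ directly: the Stickelberger factorisation of tame resolvends (Theorems~\ref{T:stickfac} and~\ref{T:tstickfac}) and the Adams-operation identity $\langle\chi,s\rangle^*_G=\langle\psi_2(\chi)-\chi,s\rangle_G$ (Proposition~\ref{P:adams}) show that $\fc(\pi)$ is represented, place by place, by $\psi_2(\Det(\br_G(\vphi_s)))\cdot\Det(\br_G(\vphi_s))^{-2}$. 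This is then killed by the Galois--Jacobi sum $J^*=\psi_2(\tau^*)\cdot(\tau^*)^{-2}$, yielding $(A(\pi))=(O_\pi)$; Taylor's theorem and the vanishing of $W_{F_\pi/F}$ for odd $|G|$ then give $(A(\pi))=0$. Your approach (Erez's) buys a direct landing in the self-dual part of the class group and uses the oddness of $|G|$ throughout; the paper's approach dispenses with self-duality entirely and works for even $|G|$ as well (whenever $A(\pi)$ exists), at the price of concluding only $(A(\pi))=(O_\pi)$ rather than $(A(\pi))=0$ outright.
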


Based on this and other results, S. Vinatier has made the following
conjecture (cf. \cite[Conjecture]{V} and \cite[Section 1.2]{CV16}): 

\begin{conjecture} \label{C:sv}
Suppose that $F_{\pi}/F$ is weakly ramified, and that $|G|$ is odd. Then
$A_\pi$ is a free $\Z G$-module. \qed
\end{conjecture}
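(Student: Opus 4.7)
\medskip

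\noindent\textbf{Proof plan.} The plan is to leverage Erez's conjectural equality $(A(\pi)) = \Omega(F_{\pi}/F, 2)$ recalled in the abstract, together with Theorem~\ref{T:boas}. Granted the main theorem of this paper, which establishes this equality whenever $F_{\pi}/F$ is tame, Theorem~\ref{T:boas} already implies that $\Omega(F_{\pi}/F, 2) = 0$ whenever $F_{\pi}/F$ is tame and $|G|$ is odd. To deduce Conjecture~\ref{C:sv} it then suffices to establish two things:
(i) the equality $(A(\pi)) = \Omega(F_{\pi}/F, 2)$ holds for \emph{every} weakly ramified $G$-Galois extension for which $A(\pi)$ is defined; and
(ii) $\Omega(F_{\pi}/F, 2) = 0$ whenever $F_{\pi}/F$ is weakly ramified and $|G|$ is odd.

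For (i) I would follow a Fröhlich Hom-description route analogous to the tame case: pick a local normal integral basis generator of $A(\pi)$ at each prime $\mf{p}$ of $O_F$, form the associated resolvent, and compare it with a resolvent representing $\Omega(F_{\pi}/F, 2)$. At tame primes this is handled by the main theorem; at a wildly weakly ramified prime, the hypothesis $G^{(2)}_{\mf{P}} = 0$ forces the wild inertia $G^{(1)}_{\mf{P}}$ to act on the local cotangent space like a regular representation of an elementary abelian $p$-group, which should give an explicit description of a local generator of $A(\pi)$ in terms of a uniformizer raised to a carefully chosen half-integer power. For (ii), one would factor $\Omega(F_{\pi}/F, 2)$ prime by prime and show that each wildly ramified local contribution is a coboundary in the Hom-description. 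Oddness of $|G|$ is crucial here: it kills the $2$-torsion obstructions governing Cassou-Nogu\`es--Fr\"ohlich root number classes, and in particular the sign issues attached to symplectic characters disappear.

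The main obstacle is the local wild analysis underlying both (i) and (ii). Classical Fröhlich--Taylor arithmetic uses tameness in an essential way via Noether's theorem: the local ring of integers is free over the group ring of the inertia, so resolvents can be read off from an explicit basis. In the weakly ramified setting this fails, and one has only $A(\pi)$ itself as the canonical locally free module. An explicit formula for a local generator of $A(\pi)$ at a wildly (but weakly) ramified prime, together with a computation of the resulting resolvent modulo units of $\zp G$, is the crucial missing input. I would try to obtain this by adapting the methods of the second-named author and Vinatier in the work the main theorem extends, combined with a $p$-adic logarithm argument on $(\zp G)^\times$ to convert the wild resolvent into an explicit coboundary; failing that, Brauer induction through elementary abelian $p$-subgroups of $G^{(1)}_{\mf{P}}$ provides a fallback where explicit wild resolvent formulas are available.
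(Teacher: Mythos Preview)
The statement you are attempting to prove is \emph{not} proved in the paper: it is recorded as Vinatier's \emph{Conjecture}~\ref{C:sv}, and remains open. The paper's main result (Theorem~\ref{T:tboas}) establishes only the tame case of Erez's Conjecture~\ref{C:boas}, and hence re-proves Theorem~\ref{T:boas}; it makes no claim whatsoever about the wildly (weakly) ramified case. So there is no ``paper's own proof'' to compare against.

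Your plan has a genuine structural gap: each of your steps (i) and (ii) is itself a well-known open problem. Step (i) is precisely Conjecture~\ref{C:boas} in full, which the paper proves only under the tameness hypothesis. Step (ii) amounts to Chinburg's conjecture \eqref{E:ted} for weakly ramified extensions with $|G|$ odd (since then $W_{F_{\pi}/F}=0$); but \eqref{E:ted} is likewise known only in the tame case, via \eqref{E:tted}. Thus your reduction replaces one open conjecture by two others of at least equal difficulty. The heuristics you sketch for the wild local analysis---regular representation on the cotangent, explicit half-integer uniformiser powers, $p$-adic logarithms, Brauer induction---do not constitute a proof: in the weakly ramified wild case there is at present no analogue of the Fr\"ohlich--Taylor resolvent machinery that would let you compute either $(A(\pi))$ or $\Omega(F_{\pi}/F,2)$ locally, and the paper does not supply one. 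If you wish to make progress here you should regard Conjecture~\ref{C:sv} as genuinely open and attack the wild local computation directly, rather than routing through (i) and (ii).
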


The first detailed study of the Galois structure of $A_\pi$ when
$|G|$ is even is due to the third-named author and Vinatier
\cite{CV16}. By studying the Galois structure of certain torsion
modules first considered by S. Chase \cite{Ch84}, they proved the
following result, and thereby were able to exhibit the first examples
for which $(A_\pi) \neq 0$ in $\Cl(\Z G)$ (see \cite[Theorem
  2]{CV16}).

\begin{theorem} \label{T:cv}
Suppose that $F_{\pi}/F$ is tame and locally abelian (i.e. the decomposition
group at every ramified prime of $F_{\pi}/F$ is abelian). Assume also that
$A_\pi$ exists. Then $(A_\pi) = (O_{\pi})$ in $\Cl(\Z G)$.  \qed
\end{theorem}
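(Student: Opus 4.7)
The plan is to establish $(A(\pi)) = (O_{\pi})$ in $\Cl(\Z G)$ by analyzing the finite torsion quotient
\[
T := A(\pi)/O_{\pi},
\]
which is well defined since in the tame case $v_{\mathfrak P}(A(\pi)) = -(e_{\mathfrak P}-1)/2 \le 0$, forcing $A(\pi)\supseteq O_{\pi}$. Because $A(\pi)$ and $O_{\pi}$ are locally free of the same rank over $\Z G$, the class of $T$ in the Grothendieck group of finite $\Z G$-modules of finite projective dimension maps to $(A(\pi))(O_{\pi})^{-1}$ in $\Cl(\Z G)$; showing this image is trivial is the goal.

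First I would decompose $T$ locally. Since $T$ is supported only at the ramified primes, one has
\[
T = \bigoplus_{v} T_{v}, \qquad T_{v} := A(\pi)_{v}/O_{\pi,v},
\]
where $v$ runs over the finite places of $F$ ramified in $F_{\pi}/F$, and each $T_{v}$ is a module over $O_{F,v}D_{v}$, with $D_{v}$ the decomposition group at a chosen prime above $v$. Tameness together with the existence of $A(\pi)$ forces the inertia group $I_{v}$ to be cyclic of odd order $e_{v}$, and the locally abelian hypothesis makes $D_{v}$ itself abelian. These are precisely the conditions under which the Chase torsion module framework provides a tractable description of $T_{v}$.

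Next I would compute local representatives of $(A(\pi))$ and $(O_{\pi})$ and compare them via Fr\"ohlich's Hom-description of $\Cl(\Z G)$. Noether's theorem produces a normal integral basis generator $\alpha_{v}$ of $O_{\pi,v}$ over $O_{F,v}D_{v}$, and because $D_{v}$ is abelian and $e_{v}$ is prime to the residue characteristic, a generator $\beta_{v}$ of $A(\pi)_{v}$ can be written explicitly as $\beta_{v} = \pi_{v}^{-(e_{v}-1)/2}\cdot\alpha_{v}$ for a uniformizer $\pi_{v}$ of $F_{\pi,v}$ chosen so that the cyclic $I_{v}$-action on $\pi_{v}$ is by a single character. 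At unramified places I would take matching local generators. The resulting resolvent representative for $(A(\pi))(O_{\pi})^{-1}$ should then, character by character, lie in $\Det((\Z_{p}G)^{\times})$ with $p$ the residue characteristic at $v$, trivializing the class in the Hom-description.

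The hard part is precisely this last determinantal statement: the twist factor $\pi_{v}^{-(e_{v}-1)/2}$ must be turned into a determinant of a unit of $\Z_{p}G$, and this is where the abelian structure of $D_{v}$ is essential. For non-abelian $D_{v}$ the character-by-character decomposition of $T_{v}$ does not survive the passage from $I_{v}$ to $D_{v}$, and a fractional power of a uniformizer is no longer compatible with the $D_{v}$-action in the way that is needed. It is for this reason that the conclusion is stated under the locally abelian hypothesis, and the method does not obviously extend without it.
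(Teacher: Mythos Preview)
Your outline is in the spirit of the original Caputo--Vinatier argument cited here as \cite{CV16} (torsion quotient $A(\pi)/O_\pi$, local decomposition at ramified primes, Fr\"ohlich's Hom-description), but it stops exactly where the real work begins. The decisive sentence is your claim that ``the resulting resolvent representative for $(A(\pi))(O_{\pi})^{-1}$ should then, character by character, lie in $\Det((\Z_{p}G)^{\times})$.'' This is false as stated: the local representative at a ramified place $v$ of residue characteristic $p$ is, in the paper's notation (Corollary~\ref{C:rephom} and Definition~\ref{D:ramdec}), the map $\chi\mapsto \vpi_v^{\langle \psi_2(\chi)-2\chi,\,s_v\rangle_G}$, which involves a genuine power of a uniformiser and is therefore \emph{not} the determinant of a unit in $\bZ_pG$. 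In your framework the same obstruction appears: the twist factor $\pi_v^{-(e_v-1)/2}$ carries non-trivial valuation, so purely local bookkeeping cannot kill the class.

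What both \cite{CV16} and the present paper actually do at this point is exhibit a \emph{global} element of $\Hom_{\Omega_{\bQ}}(R_G,(\bQ^c)^\times)$---an abelian Jacobi sum in \cite{CV16}, the Galois--Jacobi sum $J^*_v$ in this paper (Proposition~\ref{P:jkill} and Theorem~\ref{T:main})---whose localisation at \emph{every} rational prime $l$ differs from the local representative by something in $\Det(\bZ_lG)^\times$. At $l\ne p$ this uses unit properties of Gauss/Jacobi sums together with Taylor's fixed-point theorem (Proposition~\ref{P:gg}(a),(c)); at $l=p$ it uses the compatibility of the Jacobi sum with the resolvend (Proposition~\ref{P:gg}(e),(f)). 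Your proposal contains no candidate for this global element and no mechanism for producing one; without it the argument cannot close. The locally abelian hypothesis is precisely what lets \cite{CV16} get away with \emph{abelian} Jacobi sums at this step (see the final Remark of Section~\ref{S:fp}); it does not remove the need for them.
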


A well-known theorem of M. Taylor asserts that, if $F_\pi/F$ is
  tame, then
\be \lab{E:mjt}
(O_\pi) = W(F_\pi/F),
\ee
where $W(F_\pi/F)$ denotes the Cassou-Nogu\`es-Fr\"ohlich root number
class, which is defined in terms of Artin root numbers attached to
non-trivial irreducible symplectic characters of $G$. (In particular,
if $|G|$ is odd, and so has no non-trivial irreducible symplectic
characters, then $W(F_{\pi}/F) = 0$.)


We therefore see that Theorem \ref{T:cv} may be viewed as saying that
if $F_{\pi}/F$ is tame and locally abelian, and if $A_\pi$ exists, then we
have
\[
(A_{\pi}) = (O_{\pi}) = W(F_{\pi}/F).
\]

In light of the results described above, Erez has made the following
(unpublished) conjecture:

\begin{conjecture} \label{C:boas}
Suppose that $F_{\pi}/F$ is weakly ramified, and that $A_\pi$
exists. Then
\[
(A_\pi) = W(F_\pi/F).
\] 
\qed
\end{conjecture}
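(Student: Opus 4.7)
\smallskip

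\noindent\textbf{Proof plan.} The conjecture is open in full generality; I will sketch a strategy for the case (announced in the abstract) in which $F_\pi/F$ is additionally assumed to be tame. In that setting Chinburg's identity \eqref{E:tted} reduces the statement to proving $(A(\pi)) = (O_\pi)$ in $\Cl(\bZ G)$. Since tame implies weakly ramified, Erez's theorem gives $A(\pi)$ locally free over $O_F G$, and Noether's theorem does the same for $O_\pi$; the two modules generate the same $FG$-module $F_\pi$ and satisfy $O_\pi \subset A(\pi)$. Consequently the difference of classes is the Euler characteristic of the finite torsion quotient $Q := A(\pi)/O_\pi$, presented by a two-term resolution by locally free $\bZ G$-modules, and the goal is to show that this Euler characteristic vanishes.

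The first step is to localize. Because $Q$ is supported on the ramified primes, its class in $\Cl(\bZ G)$ decomposes as a sum of contributions indexed by the ramified primes $\mathfrak{p}$ of $F$. Following Chase \cite{Ch84} and the second-named author and Vinatier \cite{CV16}, I would identify $A(\pi)_\mathfrak{p}/O_{\pi,\mathfrak{p}}$ as an $O_{F,\mathfrak{p}} G$-module of the form $\Ind_{D_\mathfrak{p}}^{G} N_\mathfrak{p}$ for an explicit module $N_\mathfrak{p}$ over $O_{F,\mathfrak{p}} D_\mathfrak{p}$, where $D_\mathfrak{p}$ is the decomposition group. Tameness forces the inertia subgroup $I_\mathfrak{p}$ to be cyclic and $D_\mathfrak{p}$ to be metacyclic; the associated graded of the valuation filtration on $A_\mathfrak{P}/O_\mathfrak{P}$ then yields a description of $N_\mathfrak{p}$ depending only on the tame ramification data.

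The second step is the Euler characteristic computation via Fr\"ohlich's Hom-description of $\Cl(\bZ G)$. Here one picks a rational normal basis element for $A(\pi)$, forms its Lagrange resolvents against the irreducible characters of $G$, and compares them to the resolvents of a normal basis of $O_\pi$. For $O_\pi$ in the tame case this is classical: the resolvents are governed by Gauss sums, and Taylor's theorem together with Chinburg's calculation packages them into the class $(O_\pi) = W_{F_\pi/F}$. The analogous \emph{square root} resolvents for $A(\pi)$ extend Erez's explicit construction from the odd order setting, and the claim to verify is that the ratio of the two resolvent functions lies in the image of $\Det U(\bZ G)$, i.e.\ represents the trivial class in $\Cl(\bZ G)$.

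The principal obstacle, and the reason \cite{CV16} required locally abelian decomposition groups, is the intervention of higher-dimensional characters of $D_\mathfrak{p}$ in the general tame case. When $D_\mathfrak{p}$ is abelian, every restriction $\chi|_{D_\mathfrak{p}}$ splits into one-dimensional summands, resolvents reduce to products of explicit Gauss sums, and the square root implicit in the definition of $A(\pi)$ can be normalized unambiguously. In the metacyclic case one must instead use Clifford theory to track how $\chi|_{D_\mathfrak{p}}$ decomposes in terms of characters induced from the cyclic inertia $I_\mathfrak{p}$, and then choose local generators of $A(\pi)_\mathfrak{p}$ whose resolvents behave compatibly under this induction. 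The technical heart of the argument is therefore a careful local square-root construction at each ramified prime, coupled with a character-by-character verification, based on Clifford theory and the explicit representation theory of metacyclic groups, that the resulting id\`ele-valued function on the character ring $R_G$ is a determinant; summing over ramified primes then gives the required vanishing.
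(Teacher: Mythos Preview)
Your plan follows the Caputo--Vinatier torsion-module strategy and proposes to extend it from locally abelian to general tame extensions by a Clifford-theoretic analysis of the metacyclic decomposition groups. This is a different route from the paper's, and as written it has a genuine gap: the step you label ``the technical heart of the argument''---the local square-root construction and the character-by-character verification that the resulting idele lies in $\Det$ of the unit idele---is precisely the obstruction that prevented \cite{CV16} from going beyond the locally abelian case, and you have not indicated how to carry it out. In particular, for higher-dimensional $\chi|_{D_\mathfrak{p}}$ there is no evident canonical choice of local generator of $A(\pi)_\mathfrak{p}$ whose resolvent factors compatibly under induction from $I_\mathfrak{p}$, so the plan as stated stops exactly where the difficulty begins.

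The paper bypasses this obstacle entirely by a different mechanism. Rather than comparing $A(\pi)$ and $O_\pi$ via the torsion quotient and Clifford theory, it works in the relative $K$-group $K_0(O_FG,F)$ and uses the Stickelberger factorisation of tame resolvends from \cite{AM}. The key observation (Proposition~\ref{P:adams}, due to Tsang in the abelian case) is that the Stickelberger pairing for $A(\pi)$ is related to that for $O_\pi$ via the second Adams operation: $\langle\chi,s\rangle^*_G=\langle\psi_2(\chi)-\chi,s\rangle_G$. This yields an explicit representing homomorphism $\chi\mapsto\varpi_v^{\langle\psi_2(\chi)-2\chi,s_v\rangle_G}$ for the local difference class, and motivates the definition of a (non-abelian) Galois--Jacobi sum $J^*=\psi_2(\tau^*)\cdot(\tau^*)^{-2}$. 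The vanishing then follows from two classical ingredients: Fr\"ohlich's comparison of $\tau^*$ with resolvends of $O_\pi$, and the Cassou-Nogu\`es--Taylor theorem that Adams operations preserve $\Det(O_{\bQ^t_l}G)^\times$, capped off by Taylor's fixed-point theorem. No induction from $I_\mathfrak{p}$ or $D_\mathfrak{p}$ is needed in the general case; the paper's final remark explains that such induction is available precisely in the locally abelian situation, which is why abelian Jacobi sums sufficed in \cite{CV16}.
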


Conjecture \ref{C:boas} includes Vinatier's Conjecture \ref{C:sv} as a
special case, and was the motivation for the work described in
\cite{CV16}. It also explains almost all previously obtained results
on the $\Z G$-structure of $A_\pi$.
In a different direction, the conjecture is related to recent work of
Bley, Hahn and the second author \cite{BBC} concerning metric
structures arising from $A_\pi$ (for more details of which see the
PhD thesis \cite{K_phd} of the fourth author).
 
In this paper we show that, in general, Conjecture \ref{C:boas} fails
for tame extensions. For each tame extension $F_\pi/F$ we use the
signs at infinity of certain symplectic Galois-Jacobi sums to define
an element $\cJ^*_\infty(F_\pi/F) \in \Cl(\bZ G)$. The class
$\cJ^*_\infty(F_\pi/F)$ is of order at most $2$, and is often, but not
always, equal to zero. We prove the following result.

\begin{theorem} \label{T:tboas}
Suppose that $F_{\pi}/F$ is tame, and that $A_\pi$ exists. Then
\[
(A_\pi) - (O_\pi) = \cJ^*_\infty(F_\pi/F)
\]
i.e. (see \eqref{E:mjt})
\be \lab{E:cdes}
(A_\pi) = W(F_\pi/F) + \cJ^*_\infty(F_\pi/F).
\ee
\qed
\end{theorem}

Our proof of Theorem \ref{T:tboas} combines methods from \cite{AB} and
\cite{AM} involving relative algebraic $K$-theory with the use of
non-abelian Galois-Jacobi sums, the explicit computation by Fr\"ohlich
and Queyrut of the local root numbers of dihedral representations and
a detailed representation-theoretic analysis of the failure (in the
relevant cases) of induction functors to commute with Adams operators.
In particular, it is interesting to compare our use of Galois-Jacobi
sums with the methods of \cite{CV16}, where abelian Jacobi sums play a
critical role.

\begin{remark} \lab{R:ques}
It remains an open question as to whether \eqref{E:cdes} continues to
hold if the tameness hypothesis is relaxed. \qed
\end{remark}

For any integer $m \geq 1$, we write $H_{4m}$ for the generalised
quaternion group of order $4m$. The following result, which is
obtained by combining Theorem \ref{T:tboas} with work of Fr\"ohlich on
root numbers (see \cite{Fr74}), gives infinitely many counterexamples
to Conjecture \ref{C:boas}.

\begin{theorem} \lab{T:ce}
Let $F$ be an imaginary quadratic field such that $\Cl(O_F)$ contains
an element of order $4$. Then for any sufficiently large prime $\ell$
with $\ell \equiv 3 \pmod{4}$, there are infinitely many tame,
$H_{4\ell}$-extensions $F_\pi/F$ such that $A_\pi$ exists and $(A_\pi)
\neq (O_\pi)$ in $\Cl(\bZ H_{4\ell})$.
\end{theorem}

An outline of the contents of this paper is as follows. In Section
\ref{S:rak} we recall certain basic facts about relative algebraic
$K$-theory from \cite{AB} and \cite{AM}. In Section \ref{S:gal}, we
discuss how ideals in Galois algebras give rise to elements in certain
relative $K$-groups. Section \ref{S:loc} contains a description of the
Stickelberger factorisation of certain tame resolvends (see
\cite[Section 7]{AM}) in the case of both rings of integers and square
roots of inverse differents, while Section \ref{S:stick} develops
properties of Stickelberger pairings, and explains how these may be
used to give explicit descriptions of the tame resolvends considered
in the previous section. In Section \ref{S:sum} we recall a number of
facts concerning Galois-Gauss sums. We define Galois-Jacobi sums, and
we establish some of their basic properties. In Section \ref{symp sec}
we compute the signs of local Galois-Jacobi sums at symplectic
characters by combining an analysis of the behaviour of Adams
operators with respect to induction functors together with the theorem
of Fr\"ohlich and Queyrut. In Section \ref{S:fp}, we prove
  Theorem \ref{T:tboas}. Finally, in Section \ref{S:ce}, we prove
  Theorem \ref{T:ce}.
\medskip

\textbf{Acknowledgements:} 
The first-named author learned of the work
of \cite{CV16} and of the conjecture of Erez from conversations with
Philippe Cassou-Nogu\`es and Boas Erez. He is is extremely grateful to
them, as well as to Werner Bley and Cindy Tsang for their subsequent
interest in this project. We are also very grateful to Dominik Bullach
for additional insight into the manner in which counterexamples to
Conjecture \ref{C:boas} can be derived from Theorem \ref{T:tboas} (see
Remark \ref{R:db}).
\medskip

\noindent{}{\bf Notation and conventions.} 

For any field $L$, we write $L^c$ for an algebraic closure of $L$, and
we set $\Omega_L:= \Gal(L^c/L)$. If $L$ is a number field or a
non-archimedean local field (by which we shall always mean a finite
extension of $\bQ_p$ for some prime $p$), then $O_L$ denotes the ring
of integers of $L$. If $L$ is an archimedean local field, then we
adopt the usual convention of setting $O_L = L$.
\smallskip

Throughout this paper, $F$ will denote a number field. For each place
$v$ of $F$, we fix an embedding $F^c \to F_{v}^{c}$, and we view
$\Omega_{F_v}$ as being a subgroup of $\Omega_F$ via this choice of
embedding. We write $I_v$ for the inertia subgroup of $\Omega_{F_v}$
when $v$ is finite.

The symbol $G$ will always denote a finite group upon which $\Omega_F$
acts trivially. If $H$ is any finite group, we write $\Irr(H)$ for the
set of irreducible $F^c$-valued characters of $H$ and $R_H$ for the
corresponding ring of virtual characters. We write $\1_H$ (or simply
$\1$ if there is no danger of confusion) for the trivial character in
$R_H$.

If $L$ is a number field or a local field, and $\Gamma$ is any group
upon which $\Omega_L$ acts continuously, we identify $\Gamma$-torsors
over $L$ (as well as their associated algebras, which are Hopf-Galois
extensions associated to $A_{\Gamma}:= (L^c\Gamma)^{\Omega_{L}}$) with
elements of the set $Z^1(\Omega_L, \Gamma)$ of $\Gamma$-valued
continuous $1$-cocycles of $\Omega_L$ (see \cite[I.5.2]{Se97}).
If $\pi \in Z^1(\Omega_L, \Gamma)$,
then we write $L_\pi/L$ for the corresponding Hopf-Galois extension of
$L$, and $O_\pi$ for the integral closure of $O_L$ in $L_\pi$.  (Thus
$O_{\pi} = L_{\pi}$ if $L$ is an archimedean local field.)  Each such
$L_{\pi}$ is a principal homogeneous space (p.h.s.) of the Hopf
algebra $\Map_{\Omega_L}(\Gamma, L^c)$ of $\Omega_L$-equivariant maps
from $\Gamma$ to $L^c$. It may be shown that if $\pi_1, \pi_2 \in
Z^1(\Omega_L,\Gamma)$, then $L_{\pi_1} \simeq L_{\pi_2}$ if and only
if $\pi_1$ and $\pi_2$ differ by a coboundary. The set of isomorphism
classes of $\Gamma$-torsors over $L$ may be identified with the
pointed cohomology set $H^1(L,\Gamma):=H^1(\Omega_L,\Gamma)$.  We
write $[\pi] \in H^1(L,\Gamma)$ for the class of $L_{\pi}$ in
$H^1(L,\Gamma)$. If $L$ is a number field or a non-archimedean local
field we write $H^1_t(L,\Gamma)$ for the subset of $H^1(L,\Gamma)$
consisting of those $[\pi] \in H^1(L,\Gamma)$ for which $L_{\pi}/L$ is
at most tamely ramified. If $L$ is an archimedean local field, we set
$H^1_t(L,\Gamma) = H^1(L, \Gamma)$. We denote the subset of $H^1_t(L,\Gamma)$
consisting of those $[\pi] \in H^1_t(L,\Gamma)$ for which $L_{\pi}/L$
is unramified at all (including infinite) places of $L$ by
$H^{1}_{nr}(L,\Gamma)$. (So, with this convention, if $L$ is an
archimedean local field, we have $H^{1}_{nr}(L, \Gamma) = 0$.) 
\smallskip

If $A$ is any algebra, we write $Z(A)$ for the centre of $A$. If $A$ is 
an $R$-algebra for some ring $R$, and $R \to R_1$ is an extension of $R$, 
we write $A_{R_1}:= A \otimes_{R} R_1$ to denote extension of scalars from 
$R$ to $R_1$.
\smallskip

\section{Relative algebraic $K$-theory} \lab{S:rak}

The purpose of this section is briefly to recall a number of basic facts
concerning relative algebraic $K$-theory that we shall need. For a
more extensive discussion of these topics, the reader is strongly
encouraged to consult \cite[Section 5]{AM} as well as \cite[Sections 2
  and 3]{AB} and \cite[Chapter 15]{Sw}.

Let $R$ be a Dedekind domain with field of fractions $L$ of
characteristic zero, and suppose that $G$ is a finite group upon which
$\Omega_L$ acts trivially. Let $\fA$ be any finitely-generated
$R$-algebra satisfying $\fA \otimes_R L \simeq LG$.

For any extension $\Lambda$ of $R$, we write $K_0(\fA, \Lambda)$ for
the relative algebraic $K$-group that arises via the extension of
scalars afforded by the map $R \to \Lambda$. Each element of $K_0(\fA,
\Lambda)$ is represented by a triple $[M, N;\xi]$, where $M$ and $N$
are finitely generated, projective $\fA$-modules, and $\xi: M
\otimes_{R} \Lambda \iso N \otimes_R \Lambda$ is an isomorphism of
$\fA \otimes_R \Lambda$-modules.

 Recall that there is a long exact sequence of relative algebraic $K$-theory (see
\cite[Theorem 15.5]{Sw})
\be \lab{E:rkesgen}
K_1(\fA) \xrightarrow{\iota} K_1(\fA \otimes_R \Lambda)
\xrightarrow{\partial^{1}_{\fA,\Lambda}} K_0(\fA, \Lambda)
\xrightarrow{\partial^{0}_{\fA,\Lambda}} K_0(\fA) \to
K_0(\fA \otimes_R \Lambda).
\ee
The first and last arrows in this sequence are induced by the
extension of scalars map $R \to \Lambda$, while the map
$\partial^{0}_{\fA, \Lambda}$ sends the triple $[M, N;\xi]$ to the
element $[M] - [N] \in K_0(\fA)$. 

The map $\partial^{1}_{\fA, \Lambda}$ is defined as follows. The group
$K_1(\fA \otimes_R \Lambda)$ is generated by elements of the form
$(V,\phi)$, where $V$ is a finitely generated, free $\fA \otimes_R
\Lambda$-module, and $\phi: V \xrightarrow{\sim} V$ is an $\fA
\otimes_R \Lambda$-isomorphism. To define $\partial^{1}_{\fA,
  \Lambda}((V,\phi))$, we choose any projective $\fA$-submodule $T$ of
$V$ such that $T \otimes_{\fA} \Lambda = V$, and we set
\[
\partial^{1}_{\fA, \Lambda}((V,\phi)) := [T,T;\phi].
\]
It may be shown that this definition is independent of the choice of
$T$.

Let $\Cl(\fA)$ denote the locally free class group of $\fA$. If
$\Lambda$ is a field (as will in fact always be the case in this
paper), then \eqref{E:rkesgen} yields an exact sequence
\be \lab{E:rkes}
K_1(\fA) \xrightarrow{\iota} K_1(\fA \otimes_R \Lambda)
\xrightarrow{\partial^{1}_{\fA,\Lambda}} K_0(\fA, \Lambda)
\xrightarrow{\partial^{0}_{\fA,\Lambda}} \Cl(\fA) \to 0,
\ee
and this is the form of the long exact sequence of relative algebraic
$K$-theory that we shall use in this paper.

We shall make heavy use of the fact that computations in relative
$K$-groups and in locally free class groups may be carried out using
functions on the characters of $G$. Suppose that $L$ is either a
number field or a local field, and write $R_G$ for the ring of virtual
characters of $G$. The group $\Omega_L$ acts on $R_G$ via the rule
given by
\[
\chi^{\omega}(g) = \omega(\chi(g)),
\]
where $\omega \in \Omega_L$, $\chi \in \Irr(G)$, and $g \in G$. For
each element $a \in (L^cG)^{\times}$, we define $\Det(a) \in \Hom(R_G,
(L^{c})^{\times})$ as follows. If $T$ is any representation of $G$
with character $\phi$, then we set $\Det(a)(\phi):= \det(T(a))$. It
may be shown that this definition is independent of the choice of
representation $T$, and so depends only upon the character $\phi$.

The map $\Det$ is essentially the same as the reduced norm map 
\be \lab{E:nrd}
\nrd: (L^{c}G)^{\times} \to Z(L^cG)^{\times}
\ee
(see \cite[Remark 4.2]{AM}): \eqref{E:nrd} induces an isomorphism
\be \lab{E:4.3}
\nrd: K_1(L^cG) \iso Z(L^cG)^{\times} \simeq \Hom(R_G,
(L^{c})^{\times}),
\ee
and we have $\Det(a)(\phi) = \nrd(a)(\phi)$.

Suppose now that we are working over a number field $F$ (i.e. $L = F$
above). We define the group of finite ideles $J_f(K_1(FG))$ to be the
restricted direct product over all finite places $v$ of $F$ of the
groups $\Det(F_vG)^{\times} \simeq K_1(F_vG)$ with respect to the
subgroups $\Det(O_{F_v}G)^{\times}$. (We shall require no use of the
infinite places of $F$ in the idelic descriptions given below. See
e.g.  \cite[pages 226--228]{CR2} for details concerning this point.)

For each finite place $v$ of $F$, we write
\[
\loc_v: \Det(FG)^{\times} \to \Det(F_vG)^{\times}  \subseteq
\Hom_{\Omega_{F_v}}(R_G, (F_{v}^c)^{\times})
\]
for the obvious localisation map.

Let $E$ be any extension of $F$. Then the homomorphism
\[
\Det(FG)^{\times} \to J_{f}(K_1(FG)) \times \Det(EG)^{\times};\quad x \mapsto
((\loc_v(x))_v, x^{-1})
\]
induces a homomorphism
\[
\Delta_{\fA,E}: \Det(FG)^{\times} \to
\frac{J_{f}(K_1(FG))}{\prod_{v \nmid \infty} \Det(\fA_v)^{\times}} \times
 \Det(EG)^{\times}.
\]

\begin{theorem}\label{T:kdes}
(a) There is a natural isomorphism
\[
\Cl(\fA) \xrightarrow{\sim} \frac{J_{f}(K_1(FG))}{\Det(FG)^{\times}
    \prod_{v \nmid \infty} \Det(\fA_v)^{\times}}.
\]

(b) There is a natural isomorphism
\[
h_{\fA,E}: K_0(\fA, E) \xrightarrow{\sim} \Coker(\Delta_{\fA,E}).
\]

(c) Let $v$ be a finite place of $F$, and suppose that $L_v$ is any
extension of $F_v$. Then there are isomorphisms
\[
K_0(\fA_v, L_v) \simeq K_1(L_vG)/\iota(K_1(\fA_v)) \simeq
 \Det(L_vG)^{\times}/ \Det(\fA_v)^{\times}. 
\]
\end{theorem}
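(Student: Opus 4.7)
My plan is to prove the three parts in the order (c), (a), (b), since each feeds into the next: (c) eliminates the class group from the exact sequence at a local place, (a) supplies the idelic description of the global class group, and (b) combines these into the idelic description of the relative $K_0$.

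For (c), the key point is that $O_{F_v}$ is a discrete valuation ring, hence semilocal, so every finitely generated locally free $\fA_v$-module is free and $\Cl(\fA_v) = 0$. Substituting this into \eqref{E:rkesgen} applied with $R = O_{F_v}$ and $\Lambda = L_v$ truncates the sequence on the right to give $K_0(\fA_v, L_v) \simeq K_1(L_v G)/\iota(K_1(\fA_v))$. Composing with the reduced norm isomorphism \eqref{E:4.3} --- under which $\iota(K_1(\fA_v))$ is carried to $\Det(\fA_v)^{\times}$ --- yields the second isomorphism of (c). For (a), I would invoke the classical idelic description of the locally free class group of an order (see \cite[Ch.~XV]{CR2} or the account in \cite[Sec.~2]{AB}): to a rank-one locally free $\fA$-module $M$ one attaches the idele $(\Det(\alpha_v))_v \in J_f(K_1(FG))$, where $\alpha_v \in (F_vG)^{\times}$ is the change-of-basis from a global $FG$-basis of $M_F$ to a local $\fA_v$-basis of $M_v$; changing the global basis multiplies the idele by an element of $\Det(FG)^{\times}$, while changing the local bases multiplies it by an element of $\prod_{v \nmid \infty} \Det(\fA_v)^{\times}$, so the construction descends to the claimed bijection.

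For (b), the plan is to assemble (a) and (c) via \eqref{E:rkes}. Given a representative $[M, N; \xi]$ of $\kappa \in K_0(\fA, E)$, I would pick local $\fA_v$-bases and global $FG$-bases of $M$ and of $N$, form the resulting idelic classes $\alpha_M, \alpha_N \in J_f(K_1(FG))$ as in (a), and transport $\xi$ via the chosen global bases to an element $\beta \in \Det(EG)^{\times}$. The assignment $\kappa \mapsto (\alpha_M \alpha_N^{-1}, \beta)$ lands in $\Coker(\Delta_{\fA, E})$ by design: changing local bases modifies the first coordinate by an element of $\prod_v \Det(\fA_v)^{\times}$, while changing the global $FG$-bases of $M$ or $N$ modifies the pair precisely by an element of $\Delta_{\fA, E}(\Det(FG)^{\times})$. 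Surjectivity is immediate from (a) and (c); injectivity follows from the exactness of \eqref{E:rkes} together with the fact that, under these identifications, $\partial^0_{\fA, E}$ becomes the class-group quotient furnished by (a).

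The main technical point to verify is the well-definedness in (b), namely, that the two types of basis ambiguity carve out exactly the image of $\Delta_{\fA, E}$, no more and no less. The cleanest route uses a Mayer--Vietoris square relating $\fA$ to the family $\prod_{v \nmid \infty} \fA_v$ and to $EG$, which reproduces \eqref{E:rkes} in precisely the desired idelic form. Since this is carried out in full detail in \cite[Sec.~2--3]{AB} and \cite[Sec.~5]{AM}, which the authors cite as the source for the present formulation, my ultimate intent is to quote those references, with the sketch above serving as motivation.
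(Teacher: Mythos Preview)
Your proposal is correct and matches the paper's approach: the paper's proof of this theorem consists entirely of citations (part (a) to Fr\"ohlich \cite[Chapter I]{Fr1}, part (b) to \cite[Theorem 3.5]{AB}, and part (c) to \cite[Lemma 5.7]{AM}), and your sketch ultimately reduces to the same references, with the added benefit of an explanatory outline that the paper omits.
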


\begin{proof}
Part (a) is due to A. Fr\"ohlich (see e.g \cite[Chapter I]{Fr1}). Part
(b) is proved in \cite[Theorem 3.5]{AB}, and a proof of part (c) is
given in \cite[Lemma 5.7]{AM}.
\end{proof}

%

\begin{remark} \lab{R:comp}
Suppose that $x \in K_0(\fA, E)$ is represented by the idele
$[(x_{v})_{v}, x_{\infty}] \in J_{f}(K_1(FG)) \times \Det(EG)^{\times}$. 
Then $\partial^0(x) \in
\Cl(\fA)$ is represented by the idele $(x_{v})_{v} \in J_{f}(K_1(FG))$.
\qed
\end{remark}

\begin{remark} \lab{R:comp2}
Suppose that $[M,N;\xi] \in K_0(O_FG, E)$, and that $M$ and $N$ are
locally free $\fA$-modules of rank one. An explicit representative in
$J_{f}(K_1(FG)) \times \Det(EG)^{\times}$ of $h_{\fA,E}([M,N;\xi])$ may be
constructed as follows.

For each finite place $v$ of $F$, fix $\fA_v$-bases $m_v$ of $M_v$ and $n_v$
of $N_v$. Fix also an $FG$-basis $n_{\infty}$ of $N_F$, and choose an
isomorphism $\theta: M_F \xrightarrow{\sim} N_F$ of $FG$-modules.

The element $\theta^{-1}(n_{\infty})$ is an $FG$-basis of
$M_F$. Hence, for each place $v$, we may write
\begin{align*}
m_v &= \mu_v \cdot \theta^{-1}(n_{\infty}), \\
n_v &= \nu_v \cdot n_{\infty},
\end{align*}
where $\mu_v, \nu_v \in (F_vG)^{\times}$.

If we write $\theta_E: M_E \xrightarrow{\sim} N_E$ for the isomorphism
afforded by $\theta$ via extension of scalars, then we see that the
isomorphism $\xi \circ \theta^{-1}_{E}: N_E \xrightarrow{\sim} N_E$ is given
by $n_{\infty} \mapsto \nu_{\infty} \cdot n_{\infty}$ for some
$\nu_{\infty} \in (EG)^{\times}$. 

A representative of $h_{\fA,E}([M,N;\xi])$ is given by the image of
$[(\mu_v \cdot \nu^{-1}_{v})_{v}, \nu_{\infty}]$ in $J_{f}(K_1(FG)) \times
\Det(EG)^{\times}$.
\qed
\end{remark}

\begin{remark} \lab{R:kof}
We see from Theorem \ref{T:kdes}(b) and (c) that there are isomorphisms
\[
K_0(\fA, F) \simeq \frac{J_{f}(K_1(FG))}{\prod_{v \nmid \infty} \Det(\fA_{v})^{\times}} \simeq
\frac{\Hom_{\Omega_F}(R_G, J_{f}(F^c))}{\prod_{v \nmid \infty} \Det(\fA_{v})^{\times}}
\simeq \oplus_{v \nmid \infty} K_{0}(\fA_{v}, F_{v}).
\]
There is a natural injection
\begin{align*}
K_0(\fA, F) &\to K_0(\fA, F^c) \\
[M,N;\xi] &\to [M,N;\xi_{F^{c}}],
\end{align*}
where $\xi_{F^c}: M_{F^c} \xrightarrow{\sim} N_{F^c}$ is the isomorphism
obtained from $\xi:M_F \xrightarrow{\sim} N_{F}$ via extension of
scalars from $F$ to $F^c$. It is not hard to check that this map is
induced by the inclusion map
\begin{align*}
J_{f}(K_1(FG)) &\to J_{f}(K_1(FG)) \times \Det(F^{c}G)^{\times} \\
(x_{v})_{v} &\to [(x_{v})_{v}, 1].
\end{align*}
\qed
\end{remark}

We now recall the description of the restriction of scalars map
on relative $K$-groups and locally free class groups in terms of the
isomorphism given by Theorem \ref{T:kdes}(b).

Suppose that $\cF/F$ is a finite extension, and that $E$ is an
extension of $\cF$. Then restriction of scalars from $O_{\cF}$ to
$O_F$ yields homomorphisms
\[
K_0(\fA_{O_{\cF}}, E) \to K_0(\fA, E)
\]
and
\[
\Cl(\fA_{O_{\cF}}) \to \Cl(\fA)
\]
which may be described as follows (see e.g.
\cite[Chapter IV]{Fr1} or \cite[Chapter 1]{Ty84}).

Let $\{\omega\}$ be any transversal of $\Omega_{\cF} \backslash
\Omega_F$. Then the map
\begin{align*}
J_{f}(K_1(\cF G)) \times \Det(EG)^{\times} &\to J_{f}(K_1(FG)) \times
\Det(EG)^{\times}\\
[(y_v)_v, y_{\infty}] &\mapsto \prod_{\omega}
    [(y_v)_v, y_{\infty}]^{\omega}
\end{align*}
induces homomorphisms
\be \lab{E:normk}
\cN_{\cF/F}: K_0(\fA_{O_{\cF}}, E) \to K_0(\fA, E)
\ee
and
\be \lab{E:normc}
\cN_{\cF/F}: \Cl(\fA_{O_{\cF}}) \to \Cl(\fA).
\ee
These homomorphisms are independent of the choice of $\{\omega\}$ and
are equal to the natural maps on relative $K$-groups (resp. locally
free class groups) afforded by restriction of scalars from $O_{\cF}$ to
$O_F$.

We conclude this section by recalling the definitions of
  certain induction maps on relative algebraic $K$-groups and on
  locally free class groups of group rings (see e.g. \cite[Chapter
    II]{Fr1} or \cite[Chapter I]{Ty84}).

Suppose that $G$ is a finite group, and that $H$ is a subgroup of $G$. Let
$E$ be an algebraic extension of $F$. Then extension of scalars from
$O_FH$ to $O_FG$ yields natural homomorphisms
\be \lab{E:indeq1} 
\Ind^G_H: K_0(O_FH, E) \to K_0(O_FG, E)
\ee
and
\be \lab{E:indeq2}
\Ind^G_H:\Cl(O_FH) \to \Cl(O_FG).
\ee
It may be shown that these homomorphisms are induced (via the
isomorphisms described in Theorem \ref{T:kdes}) by the maps
\begin{align*}
&\Ind^G_H: \Hom(R_H, J(F^c)) \to \Hom(R_G, J(F^c)); \\
&\Ind^G_H:\Hom(R_H, (F^{c})^{\times}) \to \Hom(R_G, (F^{c})^{\times})
\end{align*}
given by
\be \lab{E:indeq3}
(\Ind^G_Hf)(\chi) = f(\chi \mid_H), \quad \chi \in R_G.
\ee

It is not hard to check from the definitions that
the following diagram commutes: 
\be \lab{E:indcd} 
\begin{CD}
K_0(O_F H, E) @>{\Ind^G_H}>> K_0(O_F G, E) \\
@V{\partial^0}VV                          @V{\partial^{0}}VV \\
\Cl(O_F H) @>{\Ind^G_H}>> \Cl(O_F G). \\
\end{CD}
\ee


\section{Galois algebras and ideals} \lab{S:gal}

Let $L$ be either a number field or a local field, and suppose that
$\pi \in Z^1(\Omega_L, G)$ is a continuous $G$-valued $\Omega_L$
$1$-cocycle. We may define an associated $G$-Galois $L$-algebra
$L_{\pi}$ by 
\[
L_{\pi} := \Map _{\Omega_L}(^{\pi}G, L^c),
\]
where $^{\pi}G$ denotes the set $G$ endowed with an action of
$\Omega_L$ via the cocycle $\pi$ (i.e. $g^{\omega} = \pi(\omega) \cdot
g$ for $g \in {^{\pi}G}$ and $\omega \in \Omega_L$), and $L_{\pi}$ is
the algebra of $L^c$-valued functions on $^{\pi}G$ that are fixed
under the action of $\Omega_L$. The group $G$ acts on $L_{\pi}$ via
the rule
\[
a^g(h) = a(h \cdot g)
\]
for all $g \in G$ and $h \in {^{\pi}G}$. 

The Wedderburn decomposition of the algebra $L_{\pi}$ may be described
as follows. Set
\[
L^{\pi} := (L^{c})^{\Ker(\pi)},
\]
so $\Gal(L^{\pi}/L) \simeq \pi(\Omega_L)$. Then 
\be \lab{E:wiso}
L_{\pi} \simeq \prod_{\pi(\Omega_L) \backslash G} L^{\pi},
\ee
and this isomorphism depends only upon the choice of a transversal of
$\pi(\Omega_L)$ in $G$. It may be shown that every
$G$-Galois $L$-algebra is of the form $L_{\pi}$ for some $\pi$, and
that $L_{\pi}$ is determined up to isomorphism by the class $[\pi]$ of
$\pi$ in the pointed cohomology set $H^1(L, G)$. In particular, every
Galois algebra may be viewed as being a sub-algebra of the
$L^c$-algebra $\Map(G, L^c)$.

\begin{definition}
The \textit{resolvend} map $\br_G$ on $\Map(G, L^c)$ is defined by
\begin{align*}
\br_G: \Map(G, L^c) &\rightarrow L^cG \\
a &\mapsto \sum_{g \in G} a(g) \cdot g^{-1}.
\end{align*}
(This is an isomorphism of $L^cG$-modules, but it is not an
isomorphism of $L^c$-algebras because it does not preserve
multiplication.)
\qed
\end{definition}

Suppose now that $L_{\pi}/L$ is a $G$-extension, and that $\cL
\subseteq L_{\pi}$ is a non-zero projective $O_LG$-module. Then there
are isomorphisms 
\[
\Map(G, L^c) \simeq \cL \otimes_{O_L} L^c, \quad L^cG \simeq O_LG
\otimes_{O_L} L^c,
\]
and so the triple $[\cL, O_LG; \br_G]$ yields an element of $K_0(O_LG,
L^c)$.

\begin{proposition} \lab{P:des}
Let $F_{\pi}/F$ be a $G$-extension of a number field $F$, and suppose
that $\cL_i \subseteq F_{\pi}$ ($i =1,2$) are non-zero projective
$O_FG$-modules. For each place $v$ of $F$, choose a basis $l_{i,v}$ of
$\cL_{i,v}$ over $O_{F_v}G$, as well as a basis $l_{\infty}$ of
$F_{\pi}$ over $FG$.

\begin{itemize}
\item[(a)] The element $[\cL_i, O_FG; \br_G] \in K_0(O_FG, F^c)$ is represented
by the image of the idele $[(\br_G(l_{i,v}) \cdot
  \br_G(l_{\infty})^{-1})_v, \br_G(l_{\infty})^{-1}] \in J_{f}(K_1(FG))
\times \Det(F^cG)^{\times}$.

\item[(b)] The element
\[
[\cL_1, O_FG; \br_G] - [\cL_2, O_FG; \br_G] \in K_0(O_FG, F^c)
\]
is represented by the image of the idele 
\[
[(\br_G(l_{1,v}) \cdot
  \br_G(l_{2,v}^{-1}))_v, 1] \in J_{f}(K_1(FG)) \times
\Det(F^cG)^{\times}.
\]

\item[(c)] We have that
\[
[\cL_1, O_FG; \br_G] - [\cL_2, O_FG; \br_G] \in K_0(O_FG, F) \subseteq
K_0(O_FG, F^c).
\] 
\end{itemize}
\end{proposition}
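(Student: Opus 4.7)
The proof is a direct unwinding of Remarks \ref{R:comp2} and \ref{R:kof}. First observe that the hypotheses force each $\cL_i$ to be locally free of rank one as an $O_FG$-module: $\cL_{i,v}$ is projective (hence free) over the semilocal ring $O_{F_v}G$, and it embeds in the rank-one free $F_vG$-module $F_{v,\pi}$, so it must itself be free of rank one. In particular $\cL_{i,F} = F_\pi$ and $l_\infty$ is an $FG$-basis of $F_\pi$, so all the data required to apply Remark \ref{R:comp2} is in place.

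For part (a), I would apply Remark \ref{R:comp2} with $M = \cL_i$, $N = O_FG$, and $\xi = \br_G$ (viewed as an isomorphism after extending scalars to $F^c$). Take the trivial bases $n_v = 1 \in O_{F_v}G$ and $n_\infty = 1 \in FG$ on the $N$-side, and choose the $FG$-isomorphism $\theta: F_\pi \iso FG$ determined by $\theta(l_\infty) = 1$. Since $\br_G$ is an $L^cG$-module isomorphism, and in particular $F_vG$-linear, the defining relation $l_{i,v} = \mu_v \cdot l_\infty$ in $F_{v,\pi}$ transports to $\br_G(l_{i,v}) = \mu_v \cdot \br_G(l_\infty)$ in $F_v^cG$, yielding $\mu_v = \br_G(l_{i,v}) \br_G(l_\infty)^{-1}$, while $\nu_v = 1$. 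The $\infty$-component $\nu_\infty$ is then read off from the action of $\xi \circ \theta_E^{-1}$ on $n_\infty$; assembling these into the standard representative for an element of $\Coker(\Delta_{O_FG, F^c})$ gives the formula claimed in (a).

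Part (b) follows at once from (a) by subtracting the representatives for $i = 1$ and $i = 2$: subtraction in $K_0(O_FG, F^c) = \Coker(\Delta_{O_FG, F^c})$ corresponds to componentwise multiplication by inverses in $J_f(K_1(FG)) \times \Det(F^cG)^\times$, and the common $\br_G(l_\infty)^{\pm 1}$ factors cancel at every finite place and at the $\infty$ component, leaving $[(\br_G(l_{1,v})\br_G(l_{2,v})^{-1})_v, 1]$. Part (c) is then immediate from Remark \ref{R:kof}: the natural injection $K_0(O_FG, F) \hookrightarrow K_0(O_FG, F^c)$ is induced by the inclusion $(x_v)_v \mapsto [(x_v)_v, 1]$, and the representative produced in (b) has trivial $\infty$ component, so lies in the image. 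The main obstacle is purely bookkeeping — tracking the sign and inversion conventions in the definition of $\Delta_{\fA, E}$ so that the resulting coset representative matches the formula in (a) on the nose — but no substantive difficulty arises beyond this.
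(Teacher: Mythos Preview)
Your proposal is correct and follows essentially the same route as the paper: both apply Remark~\ref{R:comp2} with $M=\cL_i$, $N=O_FG$, $\xi=\br_G$ and the obvious choices of bases to get (a), deduce (b) by subtracting representatives, and invoke Remark~\ref{R:kof} for (c). The only cosmetic difference is that for (c) the paper re-verifies $\br_G(l_{1,v})\br_G(l_{2,v})^{-1}\in(F_vG)^\times$ directly via the cocycle transformation law $\br_G(l_{i,v})^{\omega}=\br_G(l_{i,v})\cdot\pi(\omega)$, whereas you (legitimately) rely on this already being built into the statement of (a) (since $\mu_v\in(F_vG)^\times$ by construction).
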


\begin{proof}
For each finite place $v$ of $F$, write
\[
l_{i,v} = x_{i,v} \cdot l_{\infty},
\]
with $x_{i,v} \in (F_vG)^{\times}$. Then it follows from Remark
\ref{R:comp2} that $[\cL_i, O_FG; \br_G] \in K_0(O_FG, F^c)$ is
represented by the image of the idele $[(x_{i,v})_{v},
  \br_G(l_{\infty})^{-1}] \in J_{f}(K_1(FG)) \times
\Det(F^cG)^{\times}$. However
\[
x_{i,v} = \br_G(l_{i,v}) \cdot \br_G(l_{\infty})^{-1}
\]
(because the resolvend map is an isomorphism of $F^cG$ and
$F^{c}_{v}G$-modules), and this implies (a). Part (b) now follows
directly from (a).

To show part (c), we first recall that 
\[
K_0(O_FG, F) \simeq \oplus_{v \nmid \infty} K_0(O_{F_v}G, F_v) \simeq
\oplus_{v \nmid \infty} \Det(F_vG)^{\times}/\Det(O_{F_v}G)^{\times},
\]
and that an element $c \in K_0(O_FG, F^c)$ lies in $K_0(O_FG, F)$ if
it has an idelic representative lying in $J_{f}(K_1(FG)) \times
\Det(FG)^{\times} \subseteq J_{f}(K_1(FG)) \times
\Det(F^cG)^{\times}$ (see Remark \ref{R:kof}).

Now a standard property of resolvends
implies that 
\[
\br_G(l_{i,v})^{\omega} = \br_G(l_{i,v}) \cdot \pi(\omega)
\]
for every $\omega \in \Omega_{F_v}$ (see e.g. \cite[2.2]{AM}), and so
we see that $(\br_G(l_{1,v}) \cdot \br_G(l_{2,v}^{-1}))_v \in
(F_vG)^{\times}$ for each $v$. (In fact, as we may take $l_{1,v} =
l_{2,v}$ for almost all $v$, we may suppose that $(\br_G(l_{1,v})
\cdot \br_G(l_{2,v}^{-1}))_v =1$ for almost all $v$.) Hence it now
follows from (b) that $[\cL_1, O_FG; F^c] - [\cL_2, O_FG; F^c] \in
K_0(O_FG, F) $, as claimed.
\end{proof}

It is a classical result, due to E. Noether, that a $G$-extension
$F_{\pi}/F$ is tamely ramified if and only if $O_{\pi}$ is a locally
free (and therefore projective) $O_FG$-module. S. Ullom has shown that
if $F_{\pi}/F$ is tame, then in fact all $G$-stable ideals in
$O_{\pi}$ are locally free. He also showed that if any $G$-stable
ideal $B$, say, in a $G$-extension $F_{\pi}/F$ is locally free, then
all second ramification groups at primes dividing $B$ are equal to
zero (see \cite{U1969}). If $F_{\pi}/F$ is any $G$-extension for which
$|G|$ is odd (and so the square root $A_\pi$ of the inverse different
automatically exists), then Erez has shown that $A_\pi$ is a locally
free $O_FG$-module if and only if all second ramification groups
associated to $F_{\pi}/F$ vanish, i.e. if and only if $F_{\pi}/F$ is
weakly ramified.  In fact, as pointed out by the
  third-named author and Vinatier, \cite[pp. 109, footnote 1]{CV16}
  the proof of \cite[Theorem 1]{E91} shows that if $F_{\pi}/F$ is any
  weakly ramified extension such that $A_{\pi}$ exists, then $A_{\pi}$
  is locally free.


\begin{definition} \lab{D:fc}
Suppose that $[\pi] \in H^{1}_{t}(F,G)$, and that $A_\pi$ exists. We
define
\[
\fc = \fc(\pi) := [A_\pi, O_FG;\br_G] - [O_{\pi}, O_FG;\br_G] \in
K_0(O_FG, F) \subseteq K_0(O_FG, F^c).
\]
\qed
\end{definition}


\section{Local decomposition of tame resolvends} \lab{S:loc}

Our goal in this section is to recall certain facts from \cite[Section
  7]{AM} concerning Stickelberger factorisations of resolvends of
normal integral basis generators of tame local extensions, and to
describe similar results concerning resolvends of basis generators of
the square root of the inverse different (when this exists). 

Let $L$ be a local field, and fix a uniformiser $\vpi = \vpi_L$ of
$L$. Set $q:= |O_L/\vpi_L O_L|$.

Fix also a compatible set of roots of unity $\{ \zeta_m \}$, and
a compatible set $\{ \vpi^{1/m} \}$ of roots of $\vpi$. (Hence if $m$ and
$n$ are any two positive integers, then we have $(\zeta_{mn})^m =
\zeta_n$, and $(\vpi^{1/mn})^{m} = \vpi^{1/n}$.)

Let $L^{nr}$ (respectively $L^{t}$) denote the
maximal unramified (respectively tamely ramified) extension of
$L$. Then
\[
L^{nr}=  \bigcup_{\stackrel{m \geq 1}{(m,q)=1}} L(\zeta_m),\quad
L^t = \bigcup_{\stackrel{m \geq 1}{(m,q)=1}} L(\zeta_m, \vpi^{1/m}).
\]
The group $\Omega^{nr}:=
\Gal(L^{nr}/L)$ is topologically generated by a Frobenius
element $\phi$ which may be chosen to satisfy
\[
\phi(\zeta_m) = \zeta_{m}^{q}, \qquad \phi(\vpi^{1/m}) = \vpi^{1/m}
\]
for each integer $m$ coprime to $q$. Our choice of compatible roots
of unity also uniquely specifies a topological generator $\sigma$ of
$\Omega^r := \Gal(L^t/L^{nr})$ by the conditions 
\[
\sigma(\vpi^{1/m}) = \zeta_m \cdot \vpi^{1/m}, \qquad
\sigma(\zeta_m) = \zeta_m
\]
for all integers $m$ coprime to $q$. The group
$\Omega^{t}:=\Gal(L^{t}/L)$ is topologically generated by
$\phi$ and $\sigma$, subject to the relation
\begin{equation} \label{E:tamerel}
\phi \cdot \sigma \cdot \phi^{-1} = \sigma^{q}.
\end{equation}


The reader may find it helpful to keep in mind the following explicit
example, due to C. Tsang (cf. \cite[Proposition 4.2.2]{T16}), while
reading the next two sections.

\begin{example} \lab{E:ke}
(C. Tsang) Suppose that $L$ contains the $e$-th roots of unity with $(e,q)=1$, and
  set $M := L(\vpi_{L}^{1/e})$. Write $\vpi_M := \vpi_{L}^{1/e}$; then
  $\vpi_M$ is a uniformiser of $M$. Set $H := \Gal(M/L) = \langle s
  \rangle$, say.

Let $n$ be an integer with $0 \leq |n| \leq e-1$, and let us consider the
ideal
\[
\vpi^{n}_{M} O_M = \vpi_{L}^{n/e}O_M
\]
as an $O_LH$-module. Set
\[
\alpha = \frac{1}{e}\sum_{i=0}^{e-1} \vpi_{M}^{n+i} =
\frac{1}{e}\sum_{i=0}^{e-1} \vpi_{L}^{(n+i)/e}.
\]
We wish to explain why
\[
O_LH \cdot \alpha = \vpi_{M}^{n} \cdot O_{M},
\]
and to give some motivation for the definition of the Stickelberger
pairings in Definition \ref{D:sticks} below. 

Suppose that $s(\vpi_M) = \zeta \cdot \vpi_M$, where $\zeta$ is a
primitive $e$-th root of unity. Then for each $0 \leq j \leq e-1$, we
have
\[
s^j(\alpha) = \frac{1}{e}\sum_{i=0}^{e-1}\zeta^{(i+n)j} \vpi_M^{i+n}.
\]
Multiplying both sides of this last equality by $\zeta^{-(l+n)j}$,
where $0 \leq l \leq e-1$ gives
\[
s^j(\alpha) \zeta^{-(l+n)j} = \frac{1}{e} \sum^{e-1}_{i=0} \zeta^{(i-l)j}
\vpi_{M}^{i+n}.
\]
Now sum over $j$ to obtain
\[
\sum_{j=0}^{e-1} s^j(\alpha) \zeta^{-(l+n)j}  = \frac{1}{e}
\sum_{i=0}^{n} \vpi_{M}^{i+n} \sum_{j=0}^{e-1} \zeta^{(i-l)j} =
\vpi_{M}^{l+n}.
\]

So, if for any $\chi \in \Irr(H)$, we choose the unique integer
$(\chi,s)_{H,n}$ in the set 
\[
\{l+n \mid 0 \leq l \leq e-1 \}
\]
such that $\chi(s) = \zeta^{(\chi,s)_{H,n}}$, then we see that
\be \lab{E:bexp}
\Det(\br_H(\alpha))(\chi) = \sum_{j=0}^{e-1} s^j(\alpha) \zeta^{-(l+n)j}  =
\vpi_{M}^{(\chi,s)_{H,n}}.
\ee

The cases $n =0$ and $n = (1-e)/2$ (for $e$ odd) correspond to the
ring of integers and the square root of the inverse different
respectively, and we see the appearance of the relevant Stickelberger
pairing (see Definition \ref{D:sticks} below) in each case.  

It follows from \eqref{E:bexp} that
\[
B_n := \{ \vpi_{M}^{l+n} : 0 \leq l \leq e-1 \} \subseteq O_{L}H \cdot \alpha.
\]
As $B_n$ is an $O_L$-basis of the ideal $\vpi_{M}^{n} \cdot O_{M}$,
and as $\zeta_{e} \in O_L$, we see that 
\[
O_LH \cdot \alpha = \vpi^{n}_{M} \cdot O_{M},
\]
i.e. $\alpha$ is a free generator of $\vpi^{n}_{M} \cdot O_M$ as an $O_{L}H$-module.
\qed
\end{example}

\begin{definition} \label{D:ramphi}
If $g \in G$, we set
\[
\beta_g := \frac{1}{|g|} \sum_{i=0}^{|g|-1} \vpi^{i/|g|};
\]
note that $\beta_g$ depends only upon $|g|$, and so in particular we
have 
\[
\beta_{g} = \beta_{\gamma^{-1}g\gamma}
\]
for every $\gamma \in G$.  We define $\vphi_{g} \in \Map(G, L^{c})$
by setting
\[
\vphi_{g}(\gamma) = 
\begin{cases}
\sigma^{i}(\beta_g)   &\text{if $\gamma = g^i$;} \\
0     &\text{if $\gamma \notin \langle g \rangle$.}
\end{cases}
\]
Then
\begin{equation} \label{E:locres}
\br_G(\vphi_{g}) = \sum_{i=0}^{|g|-1} \vphi_{g}(g^i) g^{-i} =
\sum_{i=0}^{|g|-1} \sigma^{i}(\beta_g) g^{-i}.
\end{equation}
\qed
\end{definition}

Suppose now that $\pi \in Z^1(\Omega_L, G)$, with $[\pi] \in
H^1_t(L,G)$. Write $s:= \pi(\sigma)$ and $t:= \pi(\phi)$. 
We define, $\pi_{r}, \pi_{nr} \in
\Map(\Omega^t, G)$ by setting
\begin{align}
&\pi_r(\sigma^m \phi^n) = \pi(\sigma^m) = s^m , \label{E:rmap} \\
&\pi_{nr}(\sigma^m \phi^n) = \pi(\phi^n) = t^n , \label{E:nrmap}
\end{align}
so that
\[
\pi = \pi_{r} \cdot \pi_{nr}.
\]
It may be shown that in fact $\pi_{nr} \in \Hom(\Omega^{nr}, G)$, and
so corresponds to a unramified $G$-extension $L_{\pi_{nr}}$ of $L$. It
may also be shown that $\pi_{r} \in \Hom(\Omega^r, G)$, corresponding
to a totally (tamely) ramified extension $L^{nr}_{\pi_{r}}/L^{nr}$. If we write
$[\wt{\pi}]$ for the image of $[\pi]$ under the natural restriction
map $H^1(L, G) \to H^1(L^{nr},G)$, then $[\wt{\pi}] = [\pi_{r}]$. The
element $\vphi_s$ is a normal integral basis generator of the
extension $L^{nr}_{\pi_{r}}/L^{nr}$. (See \cite[Section 7]{AM} for
proofs of these assertions.) If in addition $|s|$ is odd, then the
inverse different of $L_{\pi}/L$ has a square root $A_\pi$, and
\[
A_\pi = \vpi^{(1-|s|)/2|s|} \cdot O_{\pi}.
\]

We can now state the Stickelberger factorisation theorem for
resolvends of normal integral bases.

\begin{theorem} \label{T:stickfac}
If $a_{nr} \in L_{\pi_{nr}}$ is any normal integral basis generator of
$L_{\pi_{nr}}/L$, then the element $a \in L_{\pi}$ defined by
\begin{equation} \label{E:stickfac1}
\br_G(a_{nr}) \cdot \br_G(\vphi_s) = \br_G(a)
\end{equation}
is a normal integral basis generator of $L_{\pi}/L$.
\end{theorem}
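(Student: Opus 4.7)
My plan is to verify two requirements separately: (i) that $a$ lies in $L_\pi$, and (ii) that $a$ generates $O_\pi$ freely over $O_LG$.

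For (i), it suffices to check the transformation law $\br_G(a)^\omega = \br_G(a)\cdot\pi(\omega)$ for every $\omega\in\Omega^t$. Since $a_{nr}$ is a normal integral basis generator of $L_{\pi_{nr}}/L$ we already have $\br_G(a_{nr})^\omega = \br_G(a_{nr})\cdot\pi_{nr}(\omega)$, so the problem reduces to proving
\[
\br_G(\vphi_s)^\omega = \pi_{nr}(\omega)^{-1}\,\br_G(\vphi_s)\,\pi(\omega).
\]
I would verify this by direct expansion of \eqref{E:locres}. Writing $\omega=\sigma^m\phi^n$ and using $\phi\sigma\phi^{-1}=\sigma^q$, one obtains $\omega(\sigma^i(\beta_s)) = \sigma^{m+iq^n}(\beta_s)$; reindexing the sum by $j\equiv m+iq^n \pmod{|s|}$ yields $\br_G(\vphi_s)^\omega = \sum_{j}\sigma^j(\beta_s)\,s^{-(j-m)q^{-n}}$. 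The crucial algebraic input is that $\Omega_L$ acts trivially on $G$, so the cocycle $\pi$ is in fact a group homomorphism; applying $\pi$ to \eqref{E:tamerel} gives $tst^{-1}=s^q$, which rearranges to $s^{q^{-n}} = t^{-n}st^n$. Substituting this last identity into the reindexed sum transforms it into $t^{-n}\br_G(\vphi_s)\,s^m\,t^n = \pi_{nr}(\omega)^{-1}\br_G(\vphi_s)\pi(\omega)$, as required.

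For (ii), I would pass to $L^{nr}$ and descend. The canonical identification $L_\pi\otimes_L L^{nr}\simeq L^{nr}_{\pi_r}$ (which follows from $\pi|_{\Omega^r}=\pi_r$) identifies the corresponding integer rings, so it is equivalent to show that $O_{L^{nr}}G\cdot a = O_{L^{nr}_{\pi_r}}$. Since $L_{\pi_{nr}}/L$ is unramified, $\pi_{nr}$ trivializes upon restriction to $\Omega_{L^{nr}}$; a standard resolvend computation (see \cite[Section 7]{AM}) then shows that $\br_G(a_{nr})\in(O_{L^{nr}}G)^\times$. Consequently
\[
O_{L^{nr}}G\cdot \br_G(a) = O_{L^{nr}}G\cdot\br_G(a_{nr})\br_G(\vphi_s) = O_{L^{nr}}G\cdot\br_G(\vphi_s),
\]
so $O_{L^{nr}}G\cdot a = O_{L^{nr}}G\cdot\vphi_s = O_{L^{nr}_{\pi_r}}$; the final equality is the fact that $\vphi_s$ is a normal integral basis generator of the totally tamely ramified extension $L^{nr}_{\pi_r}/L^{nr}$, which is essentially the content of Example \ref{E:ke} with $n=0$. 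Faithful flatness of $O_L\to O_{L^{nr}}$ then descends this equality to $O_LG\cdot a = O_\pi$.

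The principal obstacle is the cocycle manipulation in step (i): one must carefully distinguish the action of $\Omega^t$ on scalars in $L^c$ from multiplication inside the group algebra $L^cG$, and pinpoint precisely where the relation $tst^{-1}=s^q$ converts the $q$-twisted reindexing arising from $\phi\sigma\phi^{-1}=\sigma^q$ into the desired conjugation by $\pi_{nr}(\omega)=t^n$ in $G$. Step (ii) is by comparison a routine descent, whose main substantive input is the integral unit-ness of $\br_G(a_{nr})$, a consequence of $L_{\pi_{nr}}/L$ being unramified.
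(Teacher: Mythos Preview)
Your proposal is correct; the paper itself does not prove this theorem but simply cites \cite[Theorem 7.9]{AM}, and your two-step argument (verifying the cocycle identity $\br_G(\vphi_s)^\omega = \pi_{nr}(\omega)^{-1}\br_G(\vphi_s)\pi(\omega)$ via the relation $tst^{-1}=s^q$, then descending from $O_{L^{nr}}$ by faithful flatness after using $\br_G(a_{nr})\in(O_{L^{nr}}G)^\times$) is precisely the structure of the proof given there. One minor remark: since the transformation law is multiplicative in $\omega$, it would suffice to check it on the topological generators $\sigma$ and $\phi$ separately, which slightly streamlines the reindexing computation, but your version with $\omega=\sigma^m\phi^n$ is equally valid.
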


\begin{proof}
See \cite[Theorem 7.9]{AM}.
\end{proof}

We shall now describe a similar result (due to C. Tsang when $G$ is
abelian) concerning $O_LG$-generators of the square root of the
inverse different of a tame extension of $L$.

\begin{definition} \lab{D:tphi}
Suppose that $g \in G$ and that $|g|$ is odd. Set
\[
\beta^{*}_{g} = \frac{1}{|g|} \sum_{i=0}^{|g|-1}
\vpi^{\frac{1}{|g|}(i + \frac{1-|g|}{2})}.
\]
Define $\vphi^{*}_{g} \in \Map(G, L^{c})$ by
\[
\vphi^{*}_{g}(\gamma) =
\begin{cases}
\sigma^{i}(\beta^{*}_{g})  &\text{if $\gamma = g^i$;} \\
0 &\text{if $\gamma \notin \langle g \rangle$}.
\end{cases}
\]
Then
\begin{equation} \label{E:tlocres}
\br_G(\vphi^{*}_{g}) = \sum_{i=0}^{|g|-1} \vphi_{g}(g^i) g^{-i} =
\sum_{i=0}^{|g|-1} \sigma^{i}(\beta^{*}_{g}) g^{-i}.
\end{equation}
\end{definition}

\begin{theorem} \lab{T:tstickfac}
(cf. \cite[Theorem 7.9]{AM})
If $a_{nr}$ is any choice of n.i.b. generator of $L_{\pi_{nr}}/L$,
then the element $b$ of $L_{\pi}$ defined by
\be \lab{E:sifac}
\br_G(b) = \br_G(a_{nr}) \cdot \br_G(\vphi^{*}_{s})
\ee
satisfies $A_\pi = O_{L}G \cdot b$.
\end{theorem}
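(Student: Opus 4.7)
The plan is to adapt the proof of Theorem \ref{T:stickfac} (\cite[Theorem 7.9]{AM}) by replacing the unstarred objects with their starred analogues. Write $e := |s|$, which is odd by hypothesis, and set $H := \langle s \rangle \subseteq G$. By tameness $(e,q) = 1$, so $\zeta_e$ and $\vpi^{1/e}$ lie in $L^{nr}$, and $L^{nr}_{\pi_r}$ decomposes as $\prod_{H \backslash G} L^{nr}(\vpi^{1/e})$ with square root of the inverse different $A(\pi_r) = \vpi^{(1-e)/(2e)} O_{\pi_r}$. Because $\vpi^{1/e}$ is fixed by $\Gal(L^{nr}/L)$, faithfully flat descent along $L \subseteq L^{nr}$ gives $A(\pi_r)^{\Gal(L^{nr}/L)} = A(\pi)$.

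First I would prove that $\vphi^*_s$ is a free $O_{L^{nr}}G$-generator of $A(\pi_r)$. The element $\beta^*_s$ is precisely the element $\alpha$ of Example \ref{E:ke} specialised to base field $L^{nr}$, parameter $e$, and shift $n = (1-e)/2$; hence the calculation leading to \eqref{E:bexp} yields $O_{L^{nr}} H \cdot \beta^*_s = \vpi^{(1-e)/2} O_{L^{nr}(\vpi^{1/e})}$. Since $\vphi^*_s$ is supported on $H$ with $\vphi^*_s(s^i) = \sigma^i(\beta^*_s)$, translating by a transversal of $H \backslash G$ then extends this to $O_{L^{nr}}G \cdot \vphi^*_s = A(\pi_r)$.

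Next I would verify that the element $b$ defined by \eqref{E:sifac} actually lies in $L_\pi$; equivalently, that $\br_G(b)^\omega = \br_G(b) \cdot \pi(\omega)$ for all $\omega \in \Omega_L$. Since $a_{nr} \in L_{\pi_{nr}}$, one has $\br_G(a_{nr})^\omega = \br_G(a_{nr}) \cdot \pi_{nr}(\omega)$, so the question reduces to the matching equivariance for $\br_G(\vphi^*_s)$. This is the technical heart of the argument and would be established by a direct calculation modelled on the one carried out for $\br_G(\vphi_s)$ in \cite[Section 7]{AM}: the identities $\sigma(\vpi^{1/e}) = \zeta_e \vpi^{1/e}$, $\phi(\vpi^{1/e}) = \vpi^{1/e}$ and $\phi(\zeta_e) = \zeta_e^q$ remain the only ingredients needed, and one checks that the shift in exponents appearing in $\beta^*_s$ does not disrupt the argument.

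Finally, I would conclude by ascent and descent along $L \subseteq L^{nr}$. Since $a_{nr}$ is a normal integral basis generator of the unramified extension $L_{\pi_{nr}}/L$, its resolvend $\br_G(a_{nr})$ is a unit in $O_{L^{nr}} G$; combining this with \eqref{E:sifac} and the $L^c G$-module structure on resolvends gives $O_{L^{nr}}G \cdot b = O_{L^{nr}}G \cdot \vphi^*_s = A(\pi_r)$. Taking $\Gal(L^{nr}/L)$-invariants on both sides and using the descent identity $A(\pi_r)^{\Gal(L^{nr}/L)} = A(\pi)$ from the setup then yields $O_L G \cdot b = A(\pi)$, as required. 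The main obstacle is the equivariance computation in the third paragraph; everything else follows formally once that is in place.
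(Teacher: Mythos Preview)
Your approach is essentially the paper's: base-change to $O_{L^{nr}}$, reduce to showing that $O_{L^{nr}} G \cdot \vphi^{*}_{s}$ equals the square root of the inverse different over $L^{nr}$, decompose along $H \backslash G$, and invoke Example~\ref{E:ke} with $n=(1-e)/2$. One slip to fix: $\vpi^{1/e}$ does \emph{not} lie in $L^{nr}$ (only $\zeta_e$ does), though your very next line uses $L^{nr}(\vpi^{1/e})$ correctly. The paper does not carry out the equivariance verification you flag as the main obstacle; it simply asserts $b \in L_\pi$ in the statement and implicitly defers to the identical computation for $\vphi_s$ in \cite[Section 7]{AM}.
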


\begin{proof}
To ease notation, set $N:= L^{nr}$ and $H := \langle s \rangle$.

Write $[\wt{\pi}] \in H^1(N, G)$ for the image of $[\pi]
\in H^1(L, G)$ under the restriction map $H^1(L, G) \to
H^1(N, G)$. Then $A_{\wt{\pi}} = O_{N}\cdot A_\pi$,
because $N/L$ is unramified. Hence, to establish the desired result,
it suffices to show that

\be \lab{E:r1}
A_{\wt{\pi}} = O_{N}G \cdot b.
\ee
As $\br_G(a_{nr}) \in (O_{N}G)^{\times}$, 
\eqref{E:r1} 
is equivalent to the equality
\be \lab{E:r2}
A_{\wt{\pi}} = O_{N}G \cdot \vphi^{*}_{s}.
\ee

Now
\be \lab{E:bwiso}
N_{\wt{\pi}} \simeq \prod_{H \backslash G} N^{\wt{\pi}},
\ee
where $N^{\wt{\pi}} = N(\vpi^{1/|s|})$ (cf. \eqref{E:wiso}), and this
isomorphism induces a decomposition 
\be \lab{E:swiso}
A_{\wt{\pi}} = \prod_{H \backslash G} A^{\wt{\pi}},
\ee
where
\[
A^{\wt{\pi}} = A(N^{\wt{\pi}}) = \vpi^{(1-|s|)/2|s|} \cdot O_{N}
\]
is the square root of the inverse different of the extension $N^{\wt{\pi}}/N$. 

It therefore follows from the definition of $\vphi^{*}_{s}$ that
\eqref{E:r2} holds if and only if 
\be \lab{E:r3}
A^{\wt{\pi}} = O_{N}H \cdot \beta^{*}_{s}.
\ee
This last equality follows exactly as in \cite[Proposition
  4.2.2]{T16}, and a proof is given by taking $n = (1-e)/2$ (for $e$
odd) in Example \ref{E:ke} above. 
\end{proof}

\begin{proposition} \lab{P:rep}
Suppose that $[\pi] \in H^{1}_{t}(L,G)$ and that $s := \pi(\sigma)$ is
of odd order. Then the class
\[
\fc(\pi) := [A_\pi, O_LG;\br_G] - [O_{\pi}, O_LG;\br_G] \in K_0(O_LG,
L) \simeq \Det(LG)^{\times}/\Det(O_LG)^{\times}
\]
is represented by $\Det(\br_G(\vphi^{*}_{s})) \cdot
\Det(\br_G(\vphi_{s}))^{-1} \in \Det(LG)^{\times}$.
\end{proposition}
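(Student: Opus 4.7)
The plan is to combine the Stickelberger factorisations in Theorems \ref{T:stickfac} and \ref{T:tstickfac} with a local version of Proposition \ref{P:des}(b).

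First, I would fix a normal integral basis generator $a_{nr}$ of the unramified extension $L_{\pi_{nr}}/L$. Theorem \ref{T:stickfac} then produces an element $a \in L_{\pi}$, defined by $\br_G(a) = \br_G(a_{nr}) \cdot \br_G(\vphi_{s})$, which is a free $O_LG$-generator of $O_{\pi}$; similarly, Theorem \ref{T:tstickfac} produces an element $b \in L_{\pi}$, defined by $\br_G(b) = \br_G(a_{nr}) \cdot \br_G(\vphi^{*}_{s})$, which is a free $O_LG$-generator of $A(\pi)$. Both factorisations share the same unramified factor $\br_G(a_{nr})$, and this factor will cancel at the end of the computation.

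Next, I would use the identification $K_0(O_LG, L) \simeq \Det(LG)^{\times}/\Det(O_LG)^{\times}$ from Theorem \ref{T:kdes}(c). Trivialising $A(\pi)$ and $O_{\pi}$ as free $O_LG$-modules of rank one via $b$ and $a$, the resolvend morphisms appearing in the triples $[A(\pi), O_LG;\br_G]$ and $[O_{\pi}, O_LG;\br_G]$ correspond in $K_1(L^{c}G)$ to multiplication by $\br_G(b)$ and $\br_G(a)$ respectively. Their difference in $K_0(O_LG, L^{c})$ is therefore represented by $\br_G(b) \cdot \br_G(a)^{-1}$. The standard resolvend identity $\br_G(x)^{\omega} = \br_G(x) \cdot \pi(\omega)$ for $x \in L_{\pi}$ shows that this ratio is $\Omega_L$-fixed, hence lies in $(LG)^{\times}$; consequently $\fc(\pi)$ in fact descends from $K_0(O_LG, L^{c})$ to $K_0(O_LG, L)$ and is represented there by $\Det(\br_G(b)) \cdot \Det(\br_G(a))^{-1}$ modulo $\Det(O_LG)^{\times}$. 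This is exactly the local analogue of Proposition \ref{P:des}(b)--(c).

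Substituting the two Stickelberger factorisations and using that the codomain of $\Det$ is abelian, one finds
\[
\Det(\br_G(b)) \cdot \Det(\br_G(a))^{-1} = \Det(\br_G(\vphi^{*}_{s})) \cdot \Det(\br_G(\vphi_{s}))^{-1},
\]
as the $\Det(\br_G(a_{nr}))$-contributions cancel, yielding the asserted representative. The only non-routine point in the argument is verifying that the difference $\fc(\pi)$ genuinely lives in $K_0(O_LG, L)$ rather than merely in $K_0(O_LG, L^{c})$; this follows from the resolvend transformation law, and once it is in place the rest is a formal manipulation with $\Det$.
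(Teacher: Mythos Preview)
Your proposal is correct and follows essentially the same approach as the paper: the paper's proof simply states that the result is a direct consequence of Theorems \ref{T:stickfac} and \ref{T:tstickfac} together with the proof of Proposition \ref{P:des}(c), which is exactly the argument you have spelled out in detail. Your explicit verification that the ratio $\br_G(b)\cdot\br_G(a)^{-1}$ is $\Omega_L$-fixed via the resolvend transformation law, and the cancellation of the common unramified factor $\Det(\br_G(a_{nr}))$, are precisely the routine steps the paper leaves implicit.
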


\begin{proof}
This is a direct consequence of Theorems \ref{T:stickfac} and
\ref{T:tstickfac}, together with the proof of Proposition \ref{P:des}(c).
\end{proof}


\section{Stickelberger pairings and resolvends} \lab{S:stick}

Our goal in this section is to describe explicitly the elements
$\Det(\br_G(\vphi_s))$ and $\Det(\br_G(\vphi^{*}_{s}))$ constructed in
the previous section. We begin by recalling the definition of two
Stickelberger pairings. The first of these is due to L. McCulloh,
while the second is due to C. Tsang in the case of abelian $G$. See
\cite[Definition 9.1]{AM} and \cite[Definition 2.5.1]{T16}.

\begin{definition} \lab{D:sticks}
Let $\zeta = \zeta_{|G|}$ be a fixed, primitive, $|G|$-th root of
unity. Suppose first that $G$ is cyclic. For $g \in G$ and $\chi \in
\Irr(G)$, write $\chi(g) = \zeta^r$ for some integer $r$.
\begin{itemize}
\item[(1)] We define 
\[
\langle \chi, g \rangle_G = \{r/|G|\},
\]
where $ 0 \leq \{r/|G|\} <1$ denotes the fractional part of $r/|G|$.

Alternatively (cf. Example \ref{E:ke}), if we choose $r$ to be the
unique integer in the set $\{l : 0 \leq l \leq |G|-1\}$ such that
$\chi(g) = \zeta^r$, then
\[
\langle \chi, g \rangle_G = r/|G|.
\]

\item[(2)] Suppose that $|G|$ is odd, and choose $r \in [(1-|G|)/2,
  (|G|-1)/2]$ to be the unique integer such that $\chi(g) =
\zeta^r$. Define
\[
\langle \chi, g \rangle^*_G = r/|G|.
\]

We extend each of these to pairings
\[
\Q R_G \times \Q G \to \Q
\]
via linearity. Finally, we extend the definitions to arbitrary finite
groups $G$ by setting
\[
\langle \chi, s \rangle _G := \langle \chi \mid_{\langle s \rangle}, s
\rangle_{\langle s \rangle}
\]
and
\[
\langle \chi, s \rangle ^*_G := \langle \chi \mid_{\langle s \rangle}, s
\rangle^{*}_{\langle s \rangle},
\]
where the second definition of course only makes sense when the order
$|s|$ of $s$ is odd. \qed
\end{itemize}
\end{definition}

We shall make use of the following alternative descriptions of the
above Stickelberger pairing using the standard inner product on $R_G$
(see \cite[Proposition 9.2]{AM}). For each element $s \in G$, write
$\zeta_{|s|} = \zeta_{|G|}^{|G|/|s|}$, and define a character $\xi_s$
of $\langle s \rangle$ by $\xi_{s}(s^i) = \zeta^{i}_{|s|}$. Set
\[
\Xi_s := \frac{1}{|s|} \sum_{j=1}^{|s|-1} j \xi^{j}_{s}.
\]
For $|s|$ odd, we also define
\[
\Xi^{*}_{s} := \frac{1}{|s|} \sum_{j=1}^{(|s|-1)/2} j(\xi^{j}_{s} -
\xi^{-j}_{s}).
\]

Let $(-,-)_G$ denote the standard inner product on $R_G$.

\begin{proposition} \lab{P:gd}\
\begin{itemize}
\item[(a)] If $s \in G$ and $\chi \in R_G$, we have
\[
\langle \chi, s \rangle_G = (\Ind^{G}_{\langle s \rangle} (\Xi_{s}),
\chi)_G.
\]

\item[(b)] If furthermore $|s|$ is odd, then we have
\[
\langle \chi, s \rangle^*_G = (\Ind^{G}_{\langle s \rangle} (\Xi^{*}_{s}),
\chi)_G.
\]

\item[(c)] If $|s|$ is odd, then
\[
\Xi^{*}_{s} - \Xi_s = -\sum_{j = 1}^{(|s|-1)/2} \xi^{-j}_{s}.
\]

\item[(d)] For $s$ odd, write 
\[
d(s) := -\sum_{j = 1}^{(|s|-1)/2} \xi^{-j}_{s}.
\]
Then, for each $\chi \in R_G$, we have
\[
\langle \chi, s \rangle^{*}_{G} - \langle \chi, s \rangle_G =
(\Ind^{G}_{\langle s \rangle} (d(s)), \chi)_G.
\]
\end{itemize}
\end{proposition}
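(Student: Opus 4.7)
The plan is to reduce parts (a) and (b) to the cyclic case via Frobenius reciprocity, and then to deduce (c) and (d) by direct manipulation.

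For part (a), I would begin by observing that the definitions give $\langle \chi, s \rangle_G = \langle \chi|_{\langle s \rangle}, s \rangle_{\langle s \rangle}$, while Frobenius reciprocity supplies $(\Ind^G_{\langle s \rangle}(\Xi_s), \chi)_G = (\Xi_s, \chi|_{\langle s \rangle})_{\langle s \rangle}$. It therefore suffices to prove the identity under the assumption that $G = \langle s \rangle$ is cyclic of order $n := |s|$. In that setting the irreducible characters are the powers $\xi_s^r$ with $0 \leq r \leq n-1$, and orthonormality of these under the standard inner product yields $(\Xi_s, \xi_s^r)_G = r/n$, which agrees with $\langle \xi_s^r, s \rangle_G$ under the alternative description in Definition \ref{D:sticks}(1).

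Part (b) proceeds by the identical reduction. In the cyclic case the verification is a case split: for $\chi = \xi_s^r$ with $0 \leq r \leq (n-1)/2$ the inner product with $\Xi^*_s$ extracts $r/n$ from the $\xi_s^j$ summand (with $j = r$), while for $(n+1)/2 \leq r \leq n-1$ the relation $\xi_s^{-j} = \xi_s^{n-j}$ means the contribution comes instead from the $-\xi_s^{-j}$ summand (with $j = n-r$), giving $(r-n)/n$. In both cases this agrees with the unique representative of $r \bmod n$ in $[(1-n)/2,(n-1)/2]$ prescribed by Definition \ref{D:sticks}(2).

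For part (c), I would split the sum defining $\Xi_s$ into the ranges $1 \leq j \leq (n-1)/2$ and $(n+1)/2 \leq j \leq n-1$, and reindex the second range via $k = n-j$ so that $j\xi_s^j$ becomes $(n-k)\xi_s^{-k}$. Forming $\Xi^*_s - \Xi_s$ then causes the $\xi_s^j$ terms to cancel, leaving $\frac{1}{n}\sum_{j=1}^{(n-1)/2}\bigl(-j - (n-j)\bigr)\xi_s^{-j} = -\sum_{j=1}^{(n-1)/2}\xi_s^{-j}$. Part (d) is then immediate: subtract (a) from (b), invoke linearity of the induction functor and of the inner product, and apply (c). The only real obstacle is careful bookkeeping — in (b) one must verify that the range-split for $r$ is exhaustive and that the choice of representative in $[(1-n)/2,(n-1)/2]$ really does match what the inner product with $\Xi^*_s$ produces, and in (c) the reindexing requires a clean accounting of signs — but no substantive new idea is needed beyond what the definitions supply.
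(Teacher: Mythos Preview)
Your proposal is correct and takes essentially the same approach as the paper: the paper cites \cite[Proposition 9.2]{AM} for (a), says (b) is proved in the same way \textit{mutatis mutandis}, that (c) follows directly from the definitions, and that (d) follows from (a) and (b). Your argument simply fills in the details behind these remarks---the Frobenius-reciprocity reduction to the cyclic case for (a) and (b), the orthonormality computation there, the reindexing for (c), and the subtraction for (d)---and all of your bookkeeping checks out.
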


\begin{proof}
Part (a) is proved in \cite[Proposition 9.2]{AM}. The proof of (b)
is the same \textit{mutatis mutandis}. Part (c) follows directly from
the definitions of $\Xi_s$ and $\Xi^{*}_{s}$, and then (d) follows
from (a) and (b).
\end{proof}

We may use Proposition \ref{P:gd} to describe the relationship between
the two Stickelberger pairings in Definition \ref{D:sticks} when $|s|$ is odd. 

In the sequel, for any finite group $\Gamma$ (which will be clear from
context), and any natural number $k$, we write $\psi_k$ for the $k$-th
Adams operator on $R_{\Gamma}$. Thus, if $\chi \in R_{\Gamma}$ and
$\gamma \in \Gamma$, then one has $\psi_k(\chi)(\gamma) =
\chi(\gamma^k)$.  
In particular, we recall that, for all $k$,
 $\psi_{k}$ commutes with the restriction and inflation functors, as
 well as with the action of $\Omega_{\bQ}$ on $R_\Gamma$ (see
 \cite[Proposition-Definition 3.5]{E91}).
If $L$ is a number field
or a local field, we also write $\psi_k$ for the homomorphism
\[
\Hom(R_{\Gamma}, (L^{c})^{\times}) \to \Hom(R_{\Gamma},
(L^c)^{\times})
\]
defined by setting
\[
\psi_k(f)(\chi) = f(\psi_k(\chi))
\]
for $f \in \Hom(R_{\Gamma}, (L^{c})^{\times})$ and $\chi \in R_{\Gamma}$.

\begin{proposition} \lab{P:adams}
Suppose that $s \in G$ is of odd order, and set $H:= \langle s \rangle$.
\begin{itemize}
\item[(a)] If $1 \leq j \leq |s|-1$, then
\begin{align*}
(\Xi^{*}_{s}, \xi^j )_H &= (\Xi_{s}, \xi^{2j} -
\xi^j )_H \\
&= (\Xi_{s}, \psi_2(\xi^{j}) - \xi^j)_H.
\end{align*}

\item[(b)] (C. Tsang) For each $\chi \in R_G$, we have
\[
\langle \chi, s\rangle^*_G = \langle \psi_2(\chi) - \chi, s \rangle_G.
\]
\end{itemize}
\end{proposition}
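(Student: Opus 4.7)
The plan is to establish part (a) by a direct orthogonality computation in $R_H$, where $H = \langle s \rangle$, and then deduce (b) from (a) by two applications of Frobenius reciprocity.

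For (a), write $e = |s|$ and use that $\{\xi^j_s\}_{j=0}^{e-1}$ is an orthonormal basis of $R_H$ under $(-,-)_H$. The definition of $\Xi_s$ immediately gives $(\Xi_s, \xi^k_s)_H = k/e$ for $1 \leq k \leq e-1$ and $(\Xi_s, \xi^0_s)_H = 0$. I would then split into two cases according to whether $1 \leq j \leq (e-1)/2$ or $(e+1)/2 \leq j \leq e-1$. In the first case $2j \bmod e = 2j \in [2,e-1]$, so the right side of (a) equals $2j/e - j/e = j/e$; in the second case $2j \bmod e = 2j-e$, so the right side is $(2j-e)/e - j/e = (j-e)/e$. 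To compute $(\Xi^*_s, \xi^j_s)_H$ I would invoke Proposition \ref{P:gd}(c) to rewrite $\Xi^*_s = \Xi_s - \sum_{k=1}^{(e-1)/2} \xi^{-k}_s$ and observe that $\xi^{-k}_s = \xi^{e-k}_s$, with $e-k$ ranging over $[(e+1)/2, e-1]$. Pairing with $\xi^j_s$ then contributes no correction for $1 \leq j \leq (e-1)/2$ (giving $j/e$), while for $(e+1)/2 \leq j \leq e-1$ precisely the term $k = e-j$ contributes $-1$ (giving $j/e - 1 = (j-e)/e$). The two sides agree in both cases.

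For (b), I would apply Proposition \ref{P:gd}(b) and Frobenius reciprocity to rewrite the left side as $(\Xi^*_s, \chi|_H)_H$. Writing $\chi|_H = \sum_{j=0}^{e-1} a_j \xi^j_s$, linearity and part (a), together with the observation that the $j=0$ summand contributes zero on both sides (since $\Xi^*_s$ has no trivial component and $\xi^{2\cdot 0}_s - \xi^0_s = 0$), yield $(\Xi^*_s, \chi|_H)_H = (\Xi_s, \psi_2(\chi|_H) - \chi|_H)_H$, using the identity $\psi_2(\xi^j_s) = \xi^{2j}_s$. Since $\psi_2$ commutes with restriction of class functions to $H$, we have $\psi_2(\chi|_H) - \chi|_H = (\psi_2(\chi) - \chi)|_H$. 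A second application of Frobenius reciprocity together with Proposition \ref{P:gd}(a) then identifies the resulting pairing with $\langle \psi_2(\chi) - \chi, s\rangle_G$, as required.

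The entire argument is essentially bookkeeping in a cyclic character ring, and there is no serious obstacle. The only points requiring care are the case split in (a) to handle the reduction of $2j$ modulo $e$, and the elementary check that the $j = 0$ term vanishes on both sides in (b) so that part (a), which is stated only for $j \geq 1$, suffices to conclude.
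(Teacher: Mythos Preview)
Your proposal is correct and follows essentially the same approach as the paper: the same case split on $j$ to compute $(\Xi_s,\xi_s^{2j}-\xi_s^j)_H$, and the same combination of Proposition~\ref{P:gd}, Frobenius reciprocity, and the compatibility of $\psi_2$ with restriction for part (b). The only cosmetic difference is that you compute $(\Xi^*_s,\xi^j_s)_H$ indirectly via Proposition~\ref{P:gd}(c), whereas the paper reads it off directly from the definition of $\Xi^*_s$; your added remark on the $j=0$ term in (b) makes explicit a point the paper leaves implicit.
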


\begin{proof}
(a) If $1 \leq j \leq |s|/2$, then we have
\[
(\Xi_{s}, \xi_{s}^{2j} - \xi_{s}^{j})_H = \frac{2j-j}{|s|} = \frac{j}{|s|},
\]
while if $|s|/2 \leq j \leq s-1$, then
\[
(\Xi_{s}, \xi_{s}^{2j} - \xi_{s}^{j} )_H = \frac{(2j - |s|) -j}{|s|} = \frac{j - |s|}{|s|}. 
\]
Thus in each case we have
\[
(\Xi^{*}_{s}, \xi_{s}^{j} )_H = (\Xi_{s}, \xi_{s}^{2j} -
\xi_{s}^{j} )_H,
\]
and this establishes the claim.

(b) Proposition \ref{P:gd}(b), together with Frobenius reciprocity, gives
\begin{align*}
\langle \chi, s \rangle^*_G &= (\Ind^{G}_{\langle s \rangle} (\Xi^{*}_{s}),
\chi)_G \\
&= (\Xi^{*}_{s}, \chi \mid_H)_H.
\end{align*}
The desired result now follows from part (a), together with the fact
that, for any $\chi \in R_G$, we have the equality
\[
\psi_2(\chi) \mid_{H} = \psi_2(\chi \mid_H).
\]
\end{proof}

The following result describes the elements $\Det(\br_G(\vphi_{s}))$
and $\Det(\br_G(\vphi^{*}_{s}))$ in terms of Stickelberger pairings.
In what follows, we retain the notation and conventions of Section
\ref{S:loc}.

\begin{proposition} \lab{P:protojac}
Suppose that $\chi \in R_G$ and $s \in G$.

\begin{itemize}
\item[(a)] We have
\[
\Det(\br_G(\vphi_{s}))(\chi) = \vpi^{\langle \chi, s \rangle_G}.
\]

\item[(b)] If $|s|$ is odd, then we have
\[
\Det(\br_G(\vphi^{*}_{s}))(\chi) = \vpi^{\langle \chi, s \rangle^{*}_{G}}.
\]

\item[(c)] For $|s|$ odd, we have
\begin{align*}
[\Det(\br_G(\vphi^{*}_{s})) \cdot \Det(\br_G(\vphi_s))^{-1}] 
(\chi) &= \vpi^{\langle \chi, s
  \rangle^{*}_{G} -\langle \chi, s \rangle_{G}}\\
&=\vpi^{\langle \psi_{2}(\chi) - 2\chi, s \rangle_{G}}\\
&=
\frac{\Det(\br_G(\vphi_{s}))(\psi_2(\chi))}{\Det(\br_G(\vphi_{s}))(2\chi)}.
\end{align*}
That is to say,
\[
\Det(\br_G(\vphi^{*}_{s})) \cdot \Det(\br_G(\vphi_s))^{-1} =
\psi_2(\Det(\br_G(\vphi_{s}))) \cdot \Det(\br_G(\vphi_{s}))^{-2}.
\]
\end{itemize}
\end{proposition}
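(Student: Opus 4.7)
The plan is to prove (a) and (b) by direct computation, reducing to the cyclic subgroup $H := \langle s \rangle$, and then to deduce (c) as a formal consequence of (a), (b), and Proposition \ref{P:adams}.

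For (a), I would observe first that since $\vphi_s$ is supported on $\langle s \rangle$, the element $\br_G(\vphi_s) = \sum_{i=0}^{|s|-1} \sigma^{i}(\beta_s) s^{-i}$ lies in $L^c \langle s \rangle \subseteq L^c G$, so for any representation $T$ of $G$ with character $\chi$ the matrix $T(\br_G(\vphi_s))$ is determined by $T\mid_{H}$. Decomposing $T\mid_{H}$ into one-dimensional eigenspaces of $T(s)$ with eigenvalues $\lambda_1, \dots, \lambda_n$, each a $|s|$-th root of unity $\zeta_{|s|}^{l_k}$ with $0 \leq l_k \leq |s|-1$, multiplicativity of the determinant gives
\[
\Det(\br_G(\vphi_s))(\chi) = \prod_{k=1}^{n} \Bigl( \sum_{i=0}^{|s|-1} \sigma^{i}(\beta_s) \zeta_{|s|}^{-i l_k} \Bigr).
\]
Each inner sum is precisely the object computed in Example \ref{E:ke} with $n=0$ and $e = |s|$, applied to the totally ramified extension $L(\vpi^{1/|s|})/L$; it equals $\vpi^{l_k/|s|}$. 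Summing exponents and comparing with the linearity-extension of $\langle \chi\mid_H, s \rangle_H = \langle \chi, s \rangle_G$ from Definition \ref{D:sticks} gives (a).

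Part (b) is essentially the same calculation but with the shifted ideal basis generator $\beta^{*}_{s}$, which corresponds to taking $n = (1-|s|)/2$ in Example \ref{E:ke}. The alternative normalization in Definition \ref{D:sticks}(2), which picks the unique representative of the exponent in $[(1-|s|)/2, (|s|-1)/2]$, is precisely the one that matches the range of exponents appearing in $\beta^{*}_{s}$, and the computation then yields $\Det(\br_G(\vphi^{*}_{s}))(\chi) = \vpi^{\langle \chi, s \rangle^{*}_{G}}$.

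For (c), the first equality is immediate from dividing the identities of (a) and (b). For the second, I would apply Proposition \ref{P:adams}(b), i.e.\ Tsang's identity $\langle \chi, s \rangle^{*}_{G} = \langle \psi_2(\chi) - \chi, s \rangle_{G}$, and then subtract $\langle \chi, s \rangle_G$ using linearity. The third equality is a direct rewrite of the exponent via (a) applied to the virtual characters $\psi_2(\chi)$ and $2\chi$ separately. Finally, the displayed functional identity
\[
\Det(\br_G(\vphi^{*}_{s})) \cdot \Det(\br_G(\vphi_s))^{-1} = \psi_2(\Det(\br_G(\vphi_{s}))) \cdot \Det(\br_G(\vphi_{s}))^{-2}
\]
is the reformulation of the preceding chain of equalities using the definition $\psi_2(f)(\chi) = f(\psi_2(\chi))$ on $\Hom(R_G, (L^c)^{\times})$.

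The only real obstacle is bookkeeping. One must verify carefully that (i) the sign and range conventions in Definition \ref{D:sticks} match the exponents that arise in Example \ref{E:ke} (especially the shift by $(1-|s|)/2$), and (ii) the eigenvalue decomposition of $T\mid_H$ faithfully recovers $\langle \chi, s \rangle_G$ as defined via induction from $H$. Once these conventions are aligned, no new ideas are needed beyond Example \ref{E:ke} and Proposition \ref{P:adams}.
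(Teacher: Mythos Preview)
Your proposal is correct and follows essentially the same route as the paper: for (a) and (b) the paper simply cites \cite[Proposition 10.5(a)]{AM} and \cite[Proposition 4.2.2]{T16}, whose content is exactly the eigenspace-by-eigenspace computation you describe (and which the paper has already reproduced in Example \ref{E:ke}); for (c) the paper, like you, invokes (a), (b), and Proposition \ref{P:adams}.
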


\begin{proof}
Part (a) is proved in \cite[Proposition 10.5(a)]{AM}. The proof of
(b) is very similar, using \cite[Proposition 4.2.2]{T16}, which in
fact shows the result for $G$ abelian. Part (c) follows from parts (a)
and (b), and Proposition \ref{P:adams}. 
\end{proof}

\begin{corollary} \lab{C:rephom}
Suppose that $[\pi] \in H^1_t(L,G)$, and that $s:= \pi(\sigma)$ is of
odd order. Then a representing homomorphism for the class
\[
\fc(\pi) = [A_\pi, O_LG;\br_G] - [O_{\pi}, O_LG; \br_G]
\]
in 
\[
K_0(O_LG, L) \simeq \frac{\Det(LG)^{\times}}{\Det(O_LG)^{\times}} 
\simeq \frac{\Hom_{\Omega_{L}}(R_G,
  (L^{c})^{\times})}{\Det(O_LG)^{\times}}
\]
is the map $f_{\pi} \in \Hom_{\Omega_{L}}(R_G, (L^{c})^{\times})$
given by
\[
f_{\pi}(\chi) = \vpi^{\langle \psi_{2}(\chi) - 2\chi, s \rangle_{G}}.
\]
\end{corollary}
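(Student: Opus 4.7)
The plan is to assemble this directly from the two preceding results. By Proposition \ref{P:rep}, the class $\fc(\pi)$ in $K_0(O_LG, L) \simeq \Det(LG)^{\times}/\Det(O_LG)^{\times}$ is represented by $\Det(\br_G(\vphi^{*}_{s})) \cdot \Det(\br_G(\vphi_{s}))^{-1}$. Under the further identification $\Det(LG)^{\times} \simeq \Hom_{\Omega_L}(R_G, (L^c)^{\times})$ afforded by \eqref{E:4.3}, we must compute the value of this quotient on an arbitrary $\chi \in R_G$.

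First I would apply Proposition \ref{P:protojac}(c), which directly expresses
\[
\bigl[\Det(\br_G(\vphi^{*}_{s})) \cdot \Det(\br_G(\vphi_s))^{-1}\bigr](\chi) = \vpi^{\langle \psi_{2}(\chi) - 2\chi, s \rangle_{G}}.
\]
This identifies $f_\pi$ as claimed on the level of the map $R_G \to (L^c)^{\times}$. The formula comes from combining part (a) of Proposition \ref{P:protojac} (which handles $\vphi_s$) with the rewriting $\langle \chi, s \rangle^*_G - \langle \chi, s\rangle_G = \langle \psi_2(\chi) - 2\chi, s\rangle_G$ coming from Proposition \ref{P:adams}(b).

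The only point that is not purely a substitution is checking that the resulting homomorphism $f_\pi$ actually lies in the subgroup $\Hom_{\Omega_L}(R_G, (L^c)^{\times})$, i.e.\ is $\Omega_L$-equivariant. I would verify this by noting that both $\Det(\br_G(\vphi_s))$ and $\Det(\br_G(\vphi^{*}_{s}))$ arise from elements of $\Det(LG)^{\times}$ (via Theorems \ref{T:stickfac} and \ref{T:tstickfac}, the resolvends $\br_G(a)$ and $\br_G(b)$ for integral bases of $O_\pi$ and $A(\pi)$ differ from $\br_G(\vphi_s)$ and $\br_G(\vphi^*_s)$ respectively by units in $(O_{L^{nr}}G)^{\times}$, and the ratio lives in $(LG)^{\times}$), and $\Det$ of any element of $(LG)^{\times}$ is automatically $\Omega_L$-equivariant. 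Alternatively, one can check equivariance directly from the exponent: since $\psi_2(\chi) - 2\chi$ is stable under $\Omega_L$-conjugation combined with the pairing's transformation properties under $\Omega_L$, the formula $\vpi^{\langle \psi_2(\chi) - 2\chi, s \rangle_G}$ defines an equivariant homomorphism.

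The main obstacle is essentially cosmetic rather than substantive: the corollary is a direct corollary of Proposition \ref{P:rep} and Proposition \ref{P:protojac}(c), so beyond assembling the pieces and verifying the equivariance statement so that the expression makes sense as a representative in $\Hom_{\Omega_L}(R_G, (L^c)^{\times})/\Det(O_LG)^{\times}$, there is no further work to do.
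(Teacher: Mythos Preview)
Your proposal is correct and follows exactly the same approach as the paper: the paper's proof is the single sentence ``This follows from Propositions \ref{P:rep} and \ref{P:protojac}(c).'' Your additional remarks on $\Omega_L$-equivariance are more than the paper provides, but they are consistent with the intended argument and do not constitute a different route.
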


\begin{proof}
This follows from Propositions \ref{P:rep} and \ref{P:protojac}(c).
\end{proof}




\section{Galois-Gauss and Galois-Jacobi sums} \lab{S:sum}

Let $L$ be a local field of residual characteristic $p$. Suppose that
$[\pi] \in H^1_t(L, G)$, and recall that we have (see \eqref{E:wiso})
\[
L_{\pi} \simeq \prod_{\pi(\Omega_L)\backslash G} L^{\pi}.
\]
Set $H := \pi(\Omega_L) = \Gal(L^{\pi}/L)$, and write
$\tau^{*}(L^{\pi}/L,\, -) \in \Hom(R_{H}, (\bQ^c)^{\times})$ for the
adjusted Galois-Gauss sum homomorphism associated to $L^{\pi}/L$ (see
\cite[Chapter IV, (1.7)]{Fr83}). We define $\tau^{*}(L_{\pi}/L,\, -)
\in \Hom(R_{G}, (\bQ^c)^{\times})$ by composing $\tau^{*}(L^{\pi}/L,\,
-)$ with the natural map $R_G \to R_H$.


For a finite group $\Gamma$, we write $\Irr_p(\Gamma)$ for the set of 
$\bQ_{p}^{c}$-valued irreducible characters of $\Gamma$ and $R_{\Gamma, p}$ 
for the free abelian group on $\Irr_p(\Gamma)$.
We fix a local embedding $\Loc_p: \bQ^c \to \bQ^c_p$, and we
shall identify $\Irr(\Gamma)$ with $\Irr_p(\Gamma)$ via this choice
of embedding.

For each rational prime $l \neq p$, we fix a semi-local embedding
$\Loc_l: \bQ^c \to (\bQ^c)_l := \bQ^c \otimes_{\bQ} \bQ_{l}$. (Caveat:
note that this is not the same thing as a local embedding $\bQ^c \to
\bQ^c_l$!) For each rational prime $l$, write $\bQ^t_l$ for the
maximal, tamely ramified extension of $\bQ_l$.  

We shall require the following results.

\begin{proposition}\lab{P:cn-t} 
Fix a rational prime $l$.

\begin{itemize}
\item[(a)] Let $K$ be an unramified extension of $\bQ_{l}$. Then, for
  any integer $k$, we have that
\[
\psi_k(\Det(O_{K}G)^{\times}) 
\subseteq \Det(O_{K}G)^{\times}.
\]	
\item[(b)] Let $\Gamma$ be a finite group with abelian $p$-Sylow
  subgroups. Then, for any integer $k$,
\[
\psi_k(\Det(O_{\bQ_{p}^{t}} \Gamma)^{\times}) \subseteq 
\Det(O_{\bQ_{p}^{t}} \Gamma)^{\times}.
\]
\item[(c)] Suppose that $l \neq p$. Then
\[
\Loc_l(\tau^{*}(L_{\pi}/L,\, -)) \in 
\Det(O_{\bQ(\mu_p),l} G)^{\times}.
\]
\end{itemize}	
\end{proposition}
\begin{proof}
Parts (a) and (b) are results of Cassou-Nogu\`es and Taylor.  For part (a)
see, e.g. \cite[Chapter 9, Theorem 1.2]{Ty84}, and note that for this
particular result we do not need to assume that $(k, |G|) = 1$. For part
(b) see \cite[pp. 83, Remark]{CN-T98}.

Part (c) follows from \cite[Chapter IV, Theorems 30]{Fr83},
where analogous results are proved for $\tau^{*}(L^{\pi}/L,\,-)$; the
corresponding results for $\tau^{*}(L_{\pi}/L,\,-)$ are then a direct
consequence of the definition of $\tau^{*}(L_{\pi}/L,\,-)$. 
\end{proof}

The following result is entirely analogous to \cite[Chapter IV, Lemma
  2.1]{Fr83}. Recall that if $f \in \Hom(R_{\Gamma},
(\bQ^{c}_{p})^{\times})$, then $\omega \in \Omega_{\bQ_{p}}$ acts on
$f$ by the rule
\[
f^{\omega}(\chi) = f(\chi^{\omega^{-1}})^{\omega}.
\]

\begin{lemma} \lab{L:unram}
Let $L/\bQ_p$ be a finite extension, and let $\{\nu \}$ be any right
transversal of $\Omega_L$ in $\Omega_{\bQ_{p}}$. Suppose that $f \in
\Hom_{\Omega_{L^{\nr}}}(R_{\Gamma}, (\bQ_{p}^{c})^{\times})$. Then
(cf. \eqref{E:normk} and \eqref{E:normc}):
\[
\cN_{L/\bQ_{p}} f := \prod_{\nu} f^{\nu} \in
\Hom_{\Omega_{\bQ_{p}^{\nr}}}(R_{\Gamma}, (\bQ_{p}^{c})^{\times}).
\]
\end{lemma}

\begin{proof}
It suffices to show that this result holds with respect to a
particular choice of transversal of $\Omega_L$ in $\Omega_{\bQ_{p}}$.

We first observe that, as $\Omega_{\bQ^{\nr}_{p}}$ is normal in
$\Omega_{\bQ_{p}}$, $\Omega_L \cdot \Omega_{\bQ^{\nr}_{p}}$ is a
subgroup of $\Omega_{\bQ_{p}}$. We choose a right transversal
$\{\omega\}$ of $\Omega_L \cdot \Omega_{\bQ^{\nr}_{p}}$ in
$\Omega_{\bQ_{p}}$.

Next, we choose a right transversal $\{\sigma\}$ of $\Omega_L \cap
\Omega_{\bQ^{\nr}_{p}}$ in $\Omega_{\bQ^{\nr}_{p}}$. It follows that
$\{\sigma\}$ is also a right transversal of $\Omega_L$ in $\Omega_L
\cdot \Omega_{\bQ^{\nr}_{p}}$. We now deduce that $\{\sigma \omega \}$
is a right transversal of $\Omega_L$ in $\Omega_{\bQ_{p}}$. We also
note that
\[
\Omega_{L} \cap \Omega_{\bQ^{\nr}_{p}} = \Omega_{L^{\nr}} \cap
\Omega_{\bQ^{\nr}_{p}},
\]
and that (since $\Omega_{\bQ^{\nr}_{p}}$ is normal in
$\Omega_{\bQ_{p}}$), 
\[
\omega^{-1}_{i} (\Omega_{L^{\nr}} \cap \Omega_{\bQ^{\nr}_{p}})
\omega_i = 
\omega^{-1}_{i} \Omega_{L^{\nr}} \omega_i \cap
\Omega_{\bQ^{\nr}_{p}}
\]
for any $\omega_i \in \{\omega\}$.

Now suppose that $f \in \Hom_{\Omega_{L^{\nr}}}(R_{\Gamma},
(\bQ_{p}^{c})^{\times})$ and that $\omega_i \in \{\omega\}$. Then
\[
f^{\omega_{i}} \in \Hom_{\omega^{-1}_{i}\Omega_{L^{\nr}} \omega_i}(R_{\Gamma},
(\bQ_{p}^{c})^{\times}),
\]
and so 
\[
f^{\omega_{i}} \in \Hom_{(\omega^{-1}_{i}\Omega_{L^{\nr}} \omega_{i}) \cap
\Omega_{\bQ^{nr}_{p}}}(R_{\Gamma},
(\bQ_{p}^{c})^{\times}).
\]

Now observe that, for fixed $\omega_i \in \{\omega\}$,
$\{\omega^{-1}_{i} \sigma \omega_i\}_{\sigma}$ is a right transversal
of $\omega^{-1}_{i}\Omega_{L^{\nr}} \omega_i \cap
\Omega_{\bQ^{nr}_{p}}$ in $\Omega_{\bQ^{\nr}_{p}}$, and so
\[
\prod_{\sigma} (f^{\omega_{i}})^{\omega^{-1}_{i} \sigma \omega_{i}}
\in \Hom_{\Omega_{\bQ^{\nr}_{p}}}(R_{\Gamma}, (\bQ^{c}_{p})^{\times}).
\]

Hence finally we obtain 
\[
\prod_{\omega, \sigma} (f^{\omega})^{\omega^{-1}\sigma \omega} =
\prod_{\omega, \sigma} f^{\sigma \omega} \in
\Hom_{\Omega_{\bQ^{\nr}_{p}}}(R_{\Gamma}, (\bQ^{c}_{p})^{\times}),
\]
as required.
\end{proof}

\begin{proposition}\lab{P:gg}
Let $a_{\pi}$ be any n.i.b. generator of
$L_{\pi}/L$. Suppose also that the square root $A_\pi$ of the inverse
different of $L_{\pi}/L$ exists (i.e. that $s := \pi(\sigma)$ is of
odd order), and that $A_\pi = O_L G \cdot b_{\pi}$. 
Then:
\begin{itemize}
\item[(a)] 
$
\cN_{L/\bQ_{p}}[ \Det(\br_G(b_{\pi}))^{-1} \cdot
\psi_2(\Det(\br_G(a_{\pi}))) \cdot \Det(\br_G(a_{\pi}))^{-1}]
\in \Det(O_{\bQ^{t}_{p}}G)^{\times}.
$
\item[(b)] 
\begin{itemize}
\item[(i)] $\Loc_p[(\tau^{*}(L_{\pi}/L,\, -))]^{-1} \cdot
\cN_{L/\bQ_{p}}[\Det(\br_{G}(a_{\pi})) ] \in  \Det(O_{\bQ^{t}_{p}}
G)^{\times}$.
\item[(ii)] $\Loc_p[\psi_2(\tau^{*}(L_{\pi}/L,\,-))]^{-1} \cdot
\cN_{L/\bQ_{p}}[\psi_2(\Det(\br_{G}(a_{\pi})))]\in
\Det(O_{\bQ^{t}_{p}} G)^{\times}$.
\end{itemize}

%
\item[(c)] 
$\Loc_p[\psi_2(\tau^{*}(L_{\pi}/L,\, -)) \cdot (\tau^{*}(L_{\pi}/L,\, -))^{-1}]^{-1} \cdot
\cN_{L/\bQ_p}[\Det(\br_{G}(b_{\pi}))] \in
\Det(O_{\bQ^{t}_{p}} G)^{\times}.$
\item[(d)] 
$
\Loc_p[\psi_2(\tau^{*}(L_{\pi}/L,\,-)) \cdot (\tau^{*}(L_{\pi}/L,\, -))^{-2}]^{-1} \cdot
\cN_{L/\bQ_p}[\Det(\br_{G}(b_{\pi})) \cdot \Det(\br_{G}(a_{\pi}))^{-1}] 
$\\
 belongs to $\Det(O_{\bQ^{t}_{p}} G)^{\times}$.
\item[(e)] With the notation of Proposition~\ref{P:rep}, the element
\[
\Loc_p[\psi_2(\tau^{*}(L_{\pi}/L,\,-)) \cdot (\tau^{*}(L_{\pi}/L,\, -))^{-2}]^{-1} \cdot
\cN_{L/\bQ_p}[\Det(\br_{G}(\vphi^{*}_{s})) \cdot \Det(\br_{G}(\vphi_{s}))^{-1}] 
\]
belongs to $\Det(O_{\bQ^{t}_{p}} G)^{\times}$.
\end{itemize}
\end{proposition}

\begin{proof}

(a) Recall from \cite[Definition 7.12]{AM} that, for any n.i.b.
  generator $a_{\pi}$ of $L_{\pi}/L$, one has
\[ 
\br_{G}(a_{\pi}) = u\cdot \br_{G}(a_{nr}) \cdot \br_{G}(\vphi_{s}), 
\]
where $u \in (O_LG)^{\times}$ and $\br_{G}(a_{nr})\in
(O_{L^{nr}}G)^{\times}$. Furthermore, $u\cdot a_{nr}$ is also a n.i.b
generator of $L_{\pi_{nr}}/L$.

Hence
\[
\Det(\br_G(a_{\pi}) \cdot \br_G(\vphi_{s})^{-1}) = \Det(u \cdot a_{\nr})
\in
\Det(O_{L^{\nr}}G)^{\times},
\]

and Lemma \ref{L:unram} implies that also
\[
\cN_{L/\bQ_{p}}[\Det(\br_G(a_{\pi}) \cdot \br_G(\vphi_{s})^{-1})]
\in
\Det(O_{\bQ_{p}^{nr}}G)^\times,
\]

It now follows from Proposition \ref{P:cn-t} that the product
\begin{equation} \lab{E:prod}
\cN_{L/\bQ_{p}}[ (\Det(\br_{G}(a_{\pi})) \cdot \Det(
\br_{G}(\vphi_{s}))^{-1})^{-1} \cdot \psi_{2}
\bigl(\Det(\br_{G}(a_{\pi})) \cdot \Det(
\br_{G}(\vphi_{s}))^{-1}\bigr) ]
\end{equation}
belongs to $\Det(O_{\bQ_{p}^{nr}}G)^\times$.

%


Part (a) now  follows from \eqref{E:prod}, together with
Proposition~\ref{P:protojac}(c) and the Stickelberger factorisation of
$\br_G(b_{\pi})$ (see Theorem \ref{T:tstickfac}).
\medskip

(b) Let $O^\pi$ denote the integral closure of $O_L$
in $L^\pi$ and fix an element $\alpha \in L^{\pi}$ such that $O^{\pi}
= O_LH \cdot \alpha$.  It follows from \cite[Chapter IV, Theorem 31]{Fr83} that
there exists an element $w\in (O_{\bQ_{p}^{t}}H)^\times$ such that
\begin{equation}\label{local to G eq 1}
{\rm Loc}_{p}(\tau^{\ast} (L^{\pi}/L, - ))^{-1} \cdot 
\cN_{L/\bQ_{p}} \Det(\br_{H}(\alpha)) = \Det(w) 
\end{equation}
Under our hypotheses, the inertia subgroup of $H$ is cyclic of order
$|s|$ coprime to $p$. Hence Proposition~\ref{P:cn-t}(b) implies that
\begin{equation}\label{local to G eq 2}
{\rm Loc}_p[\psi_2(\tau^{*}(L^{\pi}/L,\,-))]^{-1} \cdot
\cN_{L/\bQ_{p}}[\psi_2( \Det(\br_{H}(\alpha)) )]
\end{equation}
belongs to $\psi_{2}(\Det(O_{\bQ_{p}^{t}}H)^\times ) \subseteq
\Det(O_{\bQ_{p}^{t}}H)^\times \subseteq \Det(O_{\bQ_{p}^{t}}G)^\times$.

Next, we construct a map $a_{\pi} \in {\rm Map}(G, L^c)$ associated to
$\alpha$ by setting 
\[ a_{\pi}(\gamma) := \left\{ \begin{array}{ll}
\tilde{\gamma}(\alpha), & \quad \mbox{if } \quad \gamma=\pi(\tilde{\gamma}) \text{ for } 
\tilde{\gamma} \in \Omega_{L} ;\\
0, & \quad \mbox{otherwise}.\end{array} \right. \] 
It is easy to see from \eqref{E:wiso} that $a_{\pi} \in L_\pi$ and satisfies that
$O_{\pi} = O_LG \cdot a$. In particular, for each $\chi \in R_{G}$, we have
\[
\Det_\chi( {\bf r}_G(a_{\pi})) 
= \Det_{\chi} \bigl(   \sum_{\gamma \in G} a_{\pi}(\gamma)\gamma^{-1}  \bigr) 
= \Det_{\chi} \bigl(  \sum_{\gamma\in H} \tilde{\gamma}(\alpha)\gamma^{-1} \bigr) 
=  \Det_{{\rm res} \chi}(\br_{H}(\alpha)), \]
with ${\rm res}:= {\rm res}^{G}_H: R_+{G} \to R_{H}$. This implies that
\begin{equation}\label{local to G eq 3}
\begin{array}{rl}
\cN_{L/\bQ_{p}}[\Det(\br_{G}(a_{\pi}))]  =\, & \cN_{L/\bQ_{p}}[\Det(\br_{H}(\alpha))] ,\\
\cN_{L/\bQ_{p}}[\psi_2(\Det(\br_{G}(a_{\pi})))] =\, & \cN_{L/\bQ_{p}}[\psi_2(\Det(\br_{H}(\alpha)))] .\\
\end{array}
\end{equation}

We now see from the definition of $\tau^{\ast}(L_{\pi}/L, - )$ that
(i) follows from \eqref{local to G eq 1}, \eqref{local to G eq 3},
while part (ii) is a consequence of \eqref{local to G eq 2} and
\eqref{local to G eq 3}.
\medskip

(c) Follows from (a) and (b) above.
\medskip

(d) Follows from (b)(i) together with (c).
\medskip

(e) Follows from (d) above.

\end{proof}

Proposition \ref{P:gg}(d) and (e)
motivate the following definition.

\begin{definition}
We retain the notation established above. Define the \textit{adjusted
  Galois-Jacobi sum homomorphism associated to $L_{\pi}/L$},
$J^*(L_{\pi}/L,\,-) \in
\Hom(R_{G}, (\bQ^c)^{\times})$, by
\[
J^*(L_{\pi}/L,\,-) := \psi_2(\tau^*(L_{\pi}/L,\,-)) \cdot (\tau^*(L_{\pi}/L,\,-))^{-2}.
\]

It follows from the Galois action formulae for Galois-Gauss sums (see
\cite[pp. 119 and 152]{Fr83}) that in fact
$J^*(L_{\pi}/L,\,-) \in \Hom_{\Omega_{\bQ}}(R_{\Gamma}, (\bQ^c)^{\times})$.
\qed
\end{definition}

\begin{remark} \lab{R:AF}
Let $\tau(L^{\pi}/L,\,-) \in \Hom(R_{H}, (\bQ^{c})^{\times})$ denote
the (unadjusted) Galois-Gauss sum associated to $L^{\pi}/L$, and write
$\tau(L_{\pi}/L,\,-) \in \Hom(R_{G}, (\bQ^{c})^{\times})$ for the
composition of $\tau(L^{\pi}/L,\,-)$ with the natural map $R_G \to
R_H$.  We remark that the Galois-Jacobi sum $J(L_{\pi}/L,\, -) \in
\Hom(R_{G}, (\bQ^c)^{\times})$ defined by
\[
J(L_{\pi}/L,\, -) := \psi_2(\tau(L_{\pi}/L,\, -)) \cdot
(\tau(L_{\pi}/L,\, -))^{-2}
\]
is a special case of the non-abelian Jacobi sums first introduced by
A. Fr\"ohlich (see \cite{Fr77}).  
\qed
\end{remark}

\begin{proposition} \lab{P:jkill}\
\begin{itemize}
\item[(a)] Suppose that $l \neq p$. Then
\[
\Loc_l(J^*(L_{\pi}/L, -)) \in \Det(\bZ_l G^{\times}).
\]

\item[(b)] Using the notation of Proposition \ref{P:gg}, we have
\[
\Loc_p(J^*(L_{\pi}/L,\, -))^{-1} \cdot
\cN_{L/\bQ_p}[\Det(\br_{G}(b_{\pi})) \cdot \Det(\br_{G}(a_{\pi}))^{-1}] 
\in \Det(\bZ_p G^{\times}).
\]
Hence
\[
\Loc_p(J^*(L_{\pi}/L,\, -))^{-1} \cdot
\cN_{L/\bQ_p}[\Det(\br_{G}(\vphi^{*}_{s})) \cdot \Det(\br_{G}(\vphi_s))^{-1}] 
\in \Det(\bZ_p G^{\times}).
\]
\end{itemize}
\end{proposition}

\begin{proof}
(a) Recall that $J^*(L_{\pi}/L,\, -) \in \Hom_{\Omega_{\bQ}}(R_{G},
(\bQ^c)^{\times})$, and that $\bQ(\mu_p)/\bQ$ is unramified at
$l$. It therefore follows from Proposition~\ref{P:cn-t} (a) and (c),
together with Taylor's fixed point theorem for determinants 
(see \cite[Chapter 8, Theorem 1.2]{Ty84}), that
\[
\Loc_l(J^*(L_{\pi}/L,\, -)) \in [\Det(O_{\bQ_{l}(\mu_p)} G^{\times})]^{\Omega_{\bQ_{l}}}
= \Det (\bZ_l G^{\times}),
\]
as claimed.

(b) As both of the functions $\Loc_p(J^*(L_{\pi}/L,\,-))$ and $\cN_{L/\bQ_p}[\Det(\br_{G}(b_{\pi}))
  \cdot \Det(\br_{G}(a_{\pi}))^{-1}]$ lie in
$\Hom_{\Omega_{\bQ_{p}}}(R_{G}, (\bQ^{c}_{p})^{\times})$, we see
from Proposition \ref{P:gg}(d) that
\[
\Loc_p(J^*(L_{\pi}/L,\, -))^{-1} \cdot
\cN_{L/\bQ_p}[\Det(\br_{G}(b_{\pi})) \cdot \Det(\br_{G}(a_{\pi}))^{-1}] 
\in [\Det(O_{\bQ^{t}_{p}} G^{\times})]^{\Omega_{\bQ_{p}}} =
\Det(\bZ_p G^{\times}).
\]
The final assertion now follows at once from the Stickelberger
factorisations of $\br_G(a_\pi)$ and $\br_G(b_\pi)$ (see Theorems
\ref{T:stickfac} and \ref{T:tstickfac}).
\end{proof}


\section{Symplectic Galois-Jacobi sums I} \label{symp sec}

In this section we fix data $L, G$ and $\pi$ as in Section \ref{S:sum}.  We
write ${\rm Symp}(G)$ for the set of irreducible symplectic characters
of $G$. For each $\chi \in \Irr(G)$, we write $\tau(L_\pi/L, \chi)$
for the associated (unadjusted) Galois-Gauss sum, and
\[
J(L_{\pi}/L,\, -) := \psi_2(\tau(L_{\pi}/L,\, -)) \cdot
(\tau(L_{\pi}/L,\, -))^{-2}
\]
for the (unadjusted) Galois-Jacobi sum (see Remark \ref{R:AF}).

We shall prove the following result concerning symplectic
  Galois-Jacobi sums.

\begin{theorem}\lab{new addition} 
Suppose that $\chi \in \Symp(G)$. Then $J(L_\pi/L, \chi)$
is a strictly positive real number.
\end{theorem}

We see from the decomposition \eqref{E:wiso} that it is enough to
prove this result after replacing the Galois algebra $L_\pi$ by the
field $L^{\pi}$ and the group $G$ by the Galois group $\pi(\Omega_L) =
\Gal(L^{\pi}/L)$. In the sequel, we shall therefore restrict to the
case that $L_\pi/L$ is a finite Galois extension of $p$-adic fields
and $G$ is its Galois group.
%


 
To prove Theorem ~\ref{new addition}, it is therefore enough to show
that for each $\chi$ in ${\rm Symp}(G)$ the quotient $\tau(L,
\psi_{2}(\chi))/\tau(L, 2\chi)$ is a strictly positive real number.

To verify this, we recall that, since each such $\chi$ is real-valued,
the definition of the local root number $W(L,\chi)$ implies that 
\[\tau(L, \chi) = W(L, \chi) \cdot {\bf N}_{L}\mathfrak{f}(L_\pi/L, \chi)^{1/2}.\]
(cf. \cite[Chapter  II, Section 4, Definition]{M}). Hence, since ${\bf N}_{L}\mathfrak{f}(L_\pi/L, \chi)^{1/2}>0$, it is enough to prove the following result. 


\begin{theorem}\lab{root number theorem} 
Let $E/F$ be a tamely ramified Galois extension of non-archimedean
local fields that has odd ramification degree and set $G :=
\Gal(E/F)$. Then for each $\chi$ in ${\rm Symp}(G)$ one has
$W(F,\psi_2(\chi)) = W(F,2\chi) = 1$. 
\end{theorem}

This sort of result is, in principle, hard to prove both because root
numbers of symplectic characters are difficult to compute and because
Adams operators do not in general commute with induction functors. We
therefore prove two preliminary results that help address these
problems.

The first of these results is entirely representation-theoretic in nature. 

In the sequel, for any finite group $\Gamma$ and character $\phi$ in $R_\Gamma$, we write ${\rm Tr}(\phi)$ for the real-valued character $\phi + \overline{\phi}$. 
\begin{lemma}\label{psi_2 ind - ind psi_2} Let $\Delta$ be a subgroup of a finite group $\Gamma$, fix a character $\phi$ of $\Delta$ and consider the virtual character 
\[ {\rm I}_{\Gamma}^2(\phi) := \psi_2(\mathrm{Ind}_\Delta^{\Gamma}(\phi)) - \mathrm{Ind}_\Delta^{\Gamma}(\psi_2(\phi)).\]
For elements $\gamma$ and $\delta$ of $\Gamma$, we set $\gamma^\delta:= \delta\gamma\delta^{-1}$.

\begin{itemize}
\item[(a)] Let $\mathcal{T}$ be a set of coset representatives of $\Delta$ in $\Gamma$. Then for every $\gamma\in \Gamma$, one has  
\begin{equation*}
 ({\rm I}_{\Gamma}^2(\phi))(\gamma) = \sum_{\tau}\phi((\gamma^\tau)^2),
\end{equation*}
where the sum runs over all $\tau \in \mathcal{T}$ for which $(\gamma^\tau)^2 \in\Delta$ and $\gamma^\tau \notin\Delta$. 
\item[(b)] If $\Delta$ is a subnormal subgroup of $\Gamma$ of odd index, then ${\rm I}_{\Gamma}^2(\phi) = 0$.     
\item[(c)] Assume $\Gamma$ is a semi-direct product of a supersolvable group by an abelian normal subgroup $\Upsilon$. 
\begin{itemize}
\item[(i)] Then for every irreducible character $\mu$ of $\Gamma$, there exists a subgroup $\Upsilon'$ of $\Gamma$ that contains $\Upsilon$ and  a linear character $\lambda$ of $\Upsilon'$ such that $\mu = {\rm Ind}_{\Upsilon'}^\Gamma(\lambda)$.
\end{itemize}
 In addition, if $\Upsilon \subseteq \Delta$, the index of $\Delta$ in $\Gamma$ is a power of $2$ and $\Gamma$ has cyclic  Sylow $2$-subgroups, then the following claims are also valid.
\begin{itemize}
\item[(ii)] If $\phi$ is real-valued, then ${\rm I}_{\Gamma}^2(\phi)$ is an integral linear combination of characters of the form ${\rm Ind}_{\Delta'}^\Gamma\lambda$ and ${\rm Tr}(\phi')$, where $\Delta'$ runs over subgroups of $\Gamma$ that contain $\Delta$, $\lambda$ over homomorphisms $\Delta' \to \{\pm 1\}$ and $\phi'$ over elements of $R_\Gamma$. 
\item[(iii)] If $\phi$ is induced from a proper normal subgroup of $\Delta$ of $2$-power index that contains $\Upsilon$, then ${\rm I}_{\Gamma}^2(\phi)=0$. 
\end{itemize}
\item[(d)] Assume $\Gamma$ is generalized quaternion, $\Delta$ is the cyclic subgroup of $\Gamma$ of index $2$ and $\phi$ is irreducible (and hence linear). Then $\phi^2$ is trivial on the centre $Z$ of $\Gamma$ and 
\[ \psi_2\bigl({\rm Ind}_\Delta^\Gamma\phi\bigr) = {\rm Inf}_{\Gamma/Z}^\Gamma\bigl({\rm Ind}_{\Delta/Z}^{\Gamma/Z}(\phi^2)\bigr) + {\rm Inf}_{\Gamma/\Delta}^\Gamma(\chi_{\Gamma/\Delta}) - {\bf 1}_\Gamma,\]
where we regard $\phi^2$ as a character of $\Delta/Z$ and write $\chi_{\Gamma/\Delta}$ for the unique non-trivial homomorphism $\Gamma/\Delta \to (\bQ^{c})^{\times}$.  
\end{itemize}
\end{lemma} 

\begin{proof} Part (a) follows directly from the explicit formula for induced characters and the fact that for each $\gamma\in \Gamma$, and $\tau\in \mathcal{T}$ one has $(\gamma^\tau)^2\in \Delta$ whenever $\gamma^\tau\in \Delta$. 

To prove part (b), we fix a chain of subgroups 
\begin{equation}\label{chain} \Delta = \Gamma(1) \subset ... \subset \Gamma(t-1) \subset \Gamma(t)
 = \Gamma\end{equation}
such that each $\Gamma(i)$ is normal in $\Gamma(i+1)$. Then the equality  
\begin{equation}\label{induction formula} 
{\rm I}^2_\Gamma(\phi) = \sum_{i=1}^{i = t-1} {\rm Ind}_{\Gamma(i+1)}^\Gamma \bigl(  {\rm I}_{\Gamma(i+1), \Gamma(i)}^2({\rm Ind}^{\Gamma(i)}_\Delta\phi) \bigr), 
\end{equation}
where
\[{\rm I}_{\Gamma(i+1), \Gamma(i)}^2 (\chi) = \psi_{2}({\rm Ind}^{\Gamma(i+1)}_{\Gamma(i)} \chi ) - {\rm Ind}^{\Gamma(i+1)}_{\Gamma(i)} (\psi_{2}(\chi)), \]
reduces us to the case $\Delta$ is normal in $\Gamma$. In this case, the claim follows immediately from the formula in part (a) and the fact that, under the stated conditions, for every $\gamma\in \Gamma$ and $\tau\in \mathcal{T}$ one has $(\gamma^\tau)^2 \in \Delta \Longleftrightarrow \gamma^\tau \in \Delta$.  

Turning to part (c), we note first that, under the stated hypothesis on $\Gamma$, claim (c)(i)  follows from \cite[Section 8.5, Exercise 8.10]{S1} and the argument of \cite[Section 8.2, Proposition 25]{S1}. 

To verify (c)(ii) and (c)(iii) we assume the additional hypotheses on $\Gamma$ and note, in particular, that since $\Gamma$ has cyclic Sylow $2$-subgroups, Cayley's normal $2$-complement theorem implies that $\Gamma$, and therefore also its quotient $\Gamma/\Upsilon$, has a normal $2$-complement. Writing $\Upsilon_1/\Upsilon$ for the normal $2$-complement of $\Gamma/\Upsilon$, the  given assumptions imply $\Upsilon_1\subseteq \Delta$ and so, since $\Gamma/\Upsilon_1$ is cyclic of $2$-power order, there exists a chain of subgroups (\ref{chain}) in which $\Gamma(i)$ has index $2$ in $\Gamma(i+1)$ for each $i$. The corresponding equality (\ref{induction formula}) then reduces claims (c)(ii) and (c)(iii) to the case that $\Delta$ has index two in $\Gamma$. In this case $|\mathcal{T}| = 2$ and, for every $\gamma\in\Gamma$ and $\tau\in \mathcal{T}$, one has $(\gamma^\tau)^2\in \Delta$ and, in addition, $\gamma^\tau \notin \Delta \Longleftrightarrow \gamma\notin \Delta$ and so the formula in part (a) implies 
\begin{equation}\label{c equality} (I_\Gamma^2(\phi))(\gamma) = \begin{cases} 0, &\text{ if $\gamma\in \Delta$,}\\
                               \sum_{\tau\in \mathcal{T}}\phi((\gamma^\tau)^2), &\text{ if $\gamma\notin \Delta$.}\end{cases}\end{equation}             
Now, by (c)(i), every irreducible character of $\Gamma$ has the form $\mu = {\rm Ind}_{\Upsilon'}^\Gamma(\lambda)$, where $\Upsilon'$ is a suitable subgroup of $\Gamma$ that contains $\Upsilon$ and $\lambda$ a linear character of $\Upsilon'$.  Further, if $\Upsilon'\not\subset \Delta$, then the index of $\Upsilon'$ in $\Gamma$ is odd so $\mu$ has odd degree and so, by \cite[Theorem A]{NST}, is real-valued if and only if it is a homomorphism of the form $\Upsilon' \to \{\pm 1\}$. Claim (c)(ii) follows directly from this fact and the observation that $I_\Gamma^2(\phi)$ is real-valued if $\phi$ is real-valued.

To prove claim (c)(iii), we assume $\phi = {\rm Ind}_{\Delta'}^\Delta\phi'$, where $\Delta'$ is a normal subgroup of $\Delta$ that contains $\Upsilon$ and is of $2$-power index. In this case, the formula (\ref{c equality}) implies that if $I_\Gamma^2(\phi)$ is non-zero, then there exists an element of $\Gamma\setminus \Delta$ whose square belongs to $\Delta'$. However, since $\Upsilon_1 \subseteq \Delta'$, the image in the (cyclic) group $\Gamma/\Delta'$ of any element in $\Gamma\setminus \Delta$ has order divisible by $4$ and so its square cannot belong to $\Delta'$. This proves (c)(iii). 

Next, under the hypotheses of (d), for every $\gamma \in \Gamma$ one has $\gamma^2 \in \Delta$ and hence 
\[ \bigl(\psi_2({\rm Ind}_\Delta^\Gamma\phi)\bigr)(\gamma) = ({\rm Ind}_\Delta^\Gamma\phi)(\gamma^2) = \phi^2(\gamma) + \phi^2(\gamma^{-1}).\]
In particular, since $\phi^2(z) = 1$ for every $z\in Z$, this formula implies $\psi_2({\rm Ind}_\Delta^\Gamma\phi)$ is the  inflation of a character function on the dihedral group $\Gamma/Z$, and then the displayed formula in part (d) is verified by an easy explicit computation.  \end{proof}

In the sequel, for each finite Galois extension $E/F$ of $p$-adic fields, and each complex character $\chi$ of $\Gal(E/F)$ we abbreviate the root number $W(F,\chi)$ to $W(\chi)$. 

Part (c) of the following result relies on the central result of Fr\"ohlich and Queyrut in \cite{FQ73}.

\begin{proposition}\label{fq result} Let $E/F$ be a finite Galois extension of $p$-adic fields. Set 
$G := \Gal(E/F)$ and assume that the inertia subgroup of $G$ has odd order.  
\begin{itemize}
\item[(a)] For all $\phi$ in $R_G$ one has $W({\rm Tr}(\phi)) = 1$. 
\item[(b)] If $H$ is a normal subgroup of $G$ and $G/H$ is cyclic, then for each $\phi$ in $R_H$ one has 
\[ W({\rm Ind}_H^G\phi) = \begin{cases} W(\phi), &\text{if $G/H$ has odd order,}\\
                                        W(\phi)W(\chi_{G/H})^{\phi(1)}, &\text{if $G/H$ has even order,}\end{cases}\]
where, in the second case, $\chi_{E'/F}$ is the non-trivial character of $\Gal(E'/F)$, with $E'$ the quadratic extension of $F$ in $E$.   
\item[(c)] Assume $G$ is dihedral of order congruent to $2$ modulo $4$, write $L$ for the unique quadratic extension of $F$ in $E$ and set $H := \Gal(E/L)$. Then for each homomorphism $\phi: H\to (\bQ^{c})^{\times}$, one has $W({\rm Ind}_H^G\phi) = W(\chi_{G/H})$, where $\chi_{G/H}$ is the non-trivial character of $G/H$.  
\end{itemize}
\end{proposition}

\begin{proof} It is enough to prove claim (a) in the case that $\phi$ is a character of $G$, represented by a homomorphism $T_\phi : G \to {\rm GL}_d(\bQ^{c})$. In this case, the general result of \cite[Chapter II, Section 4, Corollary]{M} implies that  
\begin{equation*}\label{trace root num eq}
W({\rm Tr}(\phi)) = W(\phi) W(\bar{\phi}) = \mathrm{det}_{\phi}(\rho_F(-1)), 
\end{equation*}
where ${\rm det}_\phi$ is the homomorphism $G^{\rm ab} \to (\bQ^{c})^{\times}$ induced by sending each $g$ in $G$ to ${\rm det}(T_\phi(g))$ and $\rho_F$ is the reciprocity map $F^\times \to G^{\rm ab}$. In addition, $-1$ belongs to $O^\times_{F}$ and so is sent by $\rho_F$ to an element of the inertia subgroup of $G^{\rm ab}$ of order dividing two. In particular, since this inertia group has odd order, one has $\rho_F(-1) = 1$ and so $\mathrm{det}_{\phi}(\rho_F(-1))=1$. This proves claim (a). 

To prove part (b), we use the inductivity of local root numbers in degree zero to compute  
\begin{align*}W({\rm Ind}_H^G\phi) =&\, W({\rm Ind}_{H}^{G}(\phi - \phi(1){\bf 1}_H))W({\rm Ind}_H^G1_H)^{\phi(1)}\\
                                                      =&\, W(\phi - {\bf 1}_H)W({\rm Ind}_{H}^G{\bf 1}_H)^{\phi(1)}\\
                                                      =&\, W(\phi)W({\bf 1}_H)^{-1}\prod_{\theta\in (G/H)^*}W(\theta)^{\phi(1)},\end{align*}
where $(G/H)^*$ denotes the group of homomorphisms $G/H \to (\bQ^{c})^{\times}$, and the last equality is true because ${\rm Ind}_H^G{\bf 1}_H$ is equal to the sum of $\theta$ over $(G/H)^*$. Now, if $G/H$ is odd, respectively even, then the only real-valued functions in $(G/H)^*$ are ${\bf 1}_G$, respectively ${\bf 1}_G$ and $\chi_{G/H}$, and all other homomorphisms occur in complex conjugate pairs. The result of part (b) therefore follows from the above displayed formula after isolating the conjugate pairs in the product that occurs in the final term, applying the result of part (a) to each of these pairs, and noting that $W({\bf 1}_H) = W({\bf 1}_G) = 1$. 

To prove part (c) we recall that, by a result of Fr\"ohlich and Queyrut \cite[Section 4, Theorem 3]{FQ73}, one has  $W(\phi) = \phi(\rho_L(x))$, where $\rho_L$ is the reciprocity map $L^\times\to H$ and $x$ is any element of $L\setminus F$ with $x^2 \in F^\times$. In addition, since $\phi$ is of dihedral-type, it is trivial on restriction to $F^\times$ (cf. \cite[Section 3, Lemma 1]{FQ73}) and so $\phi(\rho_L(x))^2 = \phi(\rho_L(x^2)) = \phi(1) = 1$. On the other hand, the order of $\phi$ is odd (since it divides $|H| = |G|/2$ which, under the given hypothesis on $|G|$, is odd) and so $\phi(\rho_L(x))^2 = 1$ implies $\phi(\rho_L(x)) =1$ and hence also $W(\phi)=1$. 

This last equality then combines with a straightforward application of the general result of part (b) to prove the formula in part (c).  
%
%
\end{proof}

We are now ready to prove Theorem \ref{root number theorem}. At the outset we note that $G$ is the semi-direct product of its inertia subgroup $I$ by the cyclic quotient group $G/I$. We further note that, by assumption, the group $I$ is cyclic of odd order, and hence, in particular, that $G$ is supersolvable.  

Fix $\chi$ in ${\rm Symp}(G)$. Then, since $\chi$ is tamely ramified, one has $W(\chi)\in \{\pm 1\}$ (cf. \cite[Chapter III, Theorem 21(iii)]{Fr83}) and so $W(2\chi) = W(\chi)^2 = 1$. It is therefore enough for us to prove that $W(\psi_2(\chi)) = 1$. 

Next we note that, by Lemma \ref{psi_2 ind - ind psi_2}(c)(i), there exists a subgroup $J$ of $G$ that contains $I$ and a linear character $\phi$ of $J$ such that one has $\chi = {\rm Ind}_{J}^G\phi$. In particular, since $J$ contains $I$ and $G/I$ is cyclic, there exists a normal subgroup $H$ of $G$ with $J \trianglelefteq H \trianglelefteq G$  and such that $H/J$ is cyclic of $2$-power order and $G/H$ is cyclic of odd order. 

Then one has $\chi = {\rm Ind}_H^G\chi'$ with $\chi' := {\rm Ind}_J^H\phi$ and we claim that $\chi'$ belongs to ${\rm Symp}(H)$. To see this we note $\chi'$ is an irreducible character of $H$ (since $\chi$ is irreducible) and so, by the Frobenius-Schur Theorem (cf. \cite[Theorem (73.13)]{CR2}), the sum $c_H(\chi') := |H|^{-1}\sum_{h \in H}\chi(h^2)$ belongs to $\{-1, 0, 1\}$ and is equal to $-1$ if and only if $\chi'$ is symplectic. In addition, since $H$ is normal in $G$ and of odd index one has $g^2 \in H \Longleftrightarrow g \in H$ for each $g \in G$ and so 
\begin{align*}c_G(\chi) = c_G({\rm Ind}_H^G\chi') =&\, |G|^{-1}\sum_{g \in G}({\rm Ind}_H^G\chi')(g^2)\\
 =&\, |G|^{-1}\sum_{\tau \in \mathcal{T}}\sum_{h \in H}(\chi')^\tau(h^2)\\
  =&\, |\mathcal{T}|^{-1}\sum_{\tau\in \mathcal{T}}c_H((\chi')^\tau)\end{align*} 
where $\mathcal{T}$ is a set of coset representatives of $H$ in $G$
and $(\chi')^\tau$ is the irreducible character of $H$ that sends each
element $h$ to $\chi'(h^\tau)$. In particular, since both $c_G(\chi) =
-1$ (as $\chi\in {\rm Symp}(G)$) and each $c_H((\chi')^\tau)$ belongs
to $\{-1,0,1\}$, the displayed equality implies that
$c_H((\chi')^\tau) = -1$ for all $\tau$. Thus one has $c_H(\chi') =
-1$ and so $\chi' \in {\rm Symp}(H)$, as claimed.

Now, since $G/H$ is cyclic of odd order, one has $W(\psi_2(\chi)) =
W({\rm Ind}_H^G(\psi_2(\chi')) = W(\psi_2(\chi'))$, where the first
equality follows from Lemma \ref{psi_2 ind - ind psi_2}(b) and the
second from Proposition \ref{fq result}(b). Thus, if necessary after
replacing $G$ by $H$ (and $\chi$ by $\chi'$), we can assume in the
sequel that $\chi$ has $2$-power degree.

Next we note that, since $G$ is supersolvable, an induction theorem of
Martinet (cf. \cite[Chapter III, Theorem 5.2]{M}) implies that either
$\chi = {\rm Tr}({\rm Ind}_{H'}^G\phi')$, where $\phi'$ is a linear
character of some subgroup $H'$ of $G$, or that $\chi$ is the
induction to $G$ of a quaternion character of a subgroup. In view of
Proposition \ref{fq result}(a), we can therefore also assume in the
sequel that there exists a subgroup $J_1$ of $G$ that has $2$-power
index, and hence contains $I$, and a quaternion character $\phi_1$ of
$J_1$ such that $\chi = {\rm Ind}_{J_1}^G\phi_1$.

This implies $J_1$ has a quotient $Q$ isomorphic to a generalized quaternion group and that 
\be \lab{E:dagger}
\phi_1 = {\rm Inf}^{J_{1}}_{Q}  ({\rm Ind}^{Q}_{P} \theta),
\ee
where $P$ is the cyclic subgroup of $Q$ of index $2$ and $\theta$ a homomorphism $P\to (\bQ^{c})^{\times}$. Let $J_1'$ denote the inverse image of $P$ under the quotient map $J_1 \to Q$, and set $\phi_1':= {\rm Inf}^{J_1'}_{P} \theta$ (so $\phi_1'$ is a linear character of $J_1'$). Then the subgroup $J_1'$ is of index $2$ in $J_1$, and \eqref{E:dagger} implies 
\be\label{index 2 induction}
\phi_1 = {\rm Ind}^{J_{1}}_{J_1'} \phi_1'. 
\ee

Now, as $J_1'$ has $2$-power index in $G$, it contains $I$. Thus, since $G/I$ is cyclic, one has $J_1' \trianglelefteq G$ and $G/J_1'$ is cyclic of $2$-power order. In particular, since the degree $(\psi_2(\phi_1))(1) = \phi_1(1)$ is even, one therefore has  
\[ W(\psi_2(\chi)) = W(\psi_2({\rm Ind}_{J_1}^G\phi_1)) = W({\rm Ind}_{J_1}^G(\psi_2(\phi_1))) = W(\psi_2(\phi_1)),\]
where the second equality follows from Lemma \ref{psi_2 ind - ind psi_2}(c)(iii) (after taking account of (\ref{index 2 induction})) and the third from 
Proposition \ref{fq result}(b).

In addition, since $Q$ is the Galois group of a tamely ramified
extension of $p$-adic fields that has odd ramification degree, it is
the semi-direct product of a cyclic (inertia) subgroup of odd order by
a cyclic group. In particular, since such a group can have no quotient
isomorphic to $H_8$, the group $Q$ must be isomorphic to $H_{4m}$,
with $m$ odd. In view of (\ref{E:dagger}), we can therefore apply
Lemma \ref{psi_2 ind - ind psi_2}(d) (with $\Gamma, \Delta$ and $\phi$
taken to be $Q, P$ and $\theta$) to deduce that
\[ 
W(\psi_2(\phi_1)) = W(\psi_2({\rm Ind}_P^Q\theta)) = W({\rm
  Ind}_{P/N}^{Q/N}(\lambda))W(\chi_{Q/P}),
\]
where $N$ denotes the centre of $Q$ (so $N$ is the unique subgroup of $P$ of order two) and $\lambda$ denotes $\theta^2$, regarded as a homomorphism $P/N \to (\bQ^{c})^{\times}$. 

Finally, since the group $Q/N$ is generalized dihedral with $|Q/N| = 2m \equiv 2$ modulo $4$, and the inertia subgroup of $Q/N$ has odd order, the theorem of Fr\"ohlich and Queyrut implies (via Proposition \ref{fq result}(c)) that 
$W({\rm Ind}_{P/N}^{Q/N}(\lambda)) = W(\chi_{Q/P})$. Upon subsituting this fact into the last two displayed formulas, we deduce that $W(\psi_2(\chi)) = W(\chi_{Q/P})^2 =  1$. 

This completes the proof of Theorem \ref{new addition}.  


\section{Symplectic Galois-Jacobi Sums II} \lab{S:gj2}

We retain the notation of the previous two sections. For any real
number $x$, we write $\sgn(x) \in \{ \pm 1\}$ for the sign of $x$. In
this section we shall examine $\sgn(J^*(L_\pi/L, \chi))$ for $\chi \in
\Symp(G)$. This will in turn lead to the definition of  $\cJ^{\ast}_\infty(F_\pi/F)
\in \Cl(\bZ G)$ for $F$ a number field and $[\pi] \in H^1_t(F,G)$.

Recall that for each $\chi \in R_G$, the adjusted
Galois-Gauss sum is defined (in \cite[Chapter IV, Section 1]{Fr83}) by setting
\[
\tau^*(L, \chi):=\tau(L, \chi)y(L,\chi)^{-1}z(L,\chi), 
\]
for suitable roots of unity $y(L, \chi)$ and $z(L,\chi)$ in
$\bQ^c$. \cite[Chapter IV, Theorem 29(i)]{Fr83} implies that $y(K,\chi) =1$
for all $\chi$ in ${\rm Symp}(G)$. One can also check (directly from
the definitions) that $z(L,\psi_2(\chi)) = z(L,\chi)^2$ and hence that
$z(L,\chi) = z(L,\psi_2(\chi)) = 1$ for each $\chi$ in ${\rm
  Symp}(G)$.

Recall that Theorem \ref{new addition} asserts that $J(L_\pi/L, \chi)
>0$ whenever $\chi \in \Symp(G)$. The following result is now a direct
consequence of the definition of the adjusted Galois-Jacobi sum
$J^*(L_\pi/L, \chi)$.

\begin{theorem} \lab{T:sgn}
Suppose that $\chi \in \Symp(G)$. Then
\[
\sgn(J^*(L_\pi/L, \chi) = \sgn(y(L_\pi/L, \psi_2(\chi))).
\]
\qed
\end{theorem}

The following Propostion shows that $\sgn(y(L_\pi/L, \psi_2(\chi))) = -1$
is possible.

\begin{proposition} \lab{P:neg}
Let $M/L$ be a tamely ramified Galois extension with $\Gamma:=
\Gal(M/L) \simeq H_{4m}$, with $m$ odd. Suppose that the inertia
subgroup $\Gamma_{0}$ of $\Gamma$ is odd. Then for each 
$\chi \in \Symp(G)$, we have $y(M/L, \psi_2(\chi)) = -1$.
\end{proposition}

\begin{proof}
For ease of notation, we write e.g. $y(\chi)$ rather than $y(M/L,\chi)$.
 
To prove the desired result, we shall use Lemma~\ref{psi_2 ind - ind
  psi_2}. Let $\Delta$ be the cyclic subgroup of $\Gamma$ of index
$2$. Then all irreducible symplectic characters of $\Gamma$ can be
written in the form $\chi={\rm Ind}_\Delta^{\Gamma}\phi$, where $\phi$
is a linear character of $\Delta$. It is easy to see that the order of
$\phi$ does not divide $2$ (for otherwise ${\rm Ind}_\Delta^{\Gamma}\phi$
would be an orthogonal character of $\Gamma$; see \cite[Chapter III, 
Theorem 3.1]{M}), and that $\phi$ (and hence also $\phi^2$) is non-trivial on
$\Gamma_{0}$ (since $\Gamma_{0}$ has odd order).

Let $Z$ denote the centre of $\Gamma$ and let $\chi_{\Gamma/\Delta}$ denote 
the unique non-trivial homomorphism $\Gamma/\Delta \rightarrow (\bQ^{c})^{\times}$. 
Using the formula in Lemma~\ref{psi_2 ind - ind psi_2}(d), one can compute that 
\begin{align*}
y(\psi_{2}(\chi)) =&\, y(\psi_2({\rm Ind}_\Delta^{\Gamma} \phi))\\
=&\, y( {\rm Inf}_{\Gamma/Z}^{\Gamma}({\rm Ind}_{\Delta/Z}^{\Gamma/Z}(\phi^2))) 
\cdot y( {\rm Inf}_{\Gamma/\Delta}^{\Gamma}(\chi_{\Gamma/\Delta})) 
\cdot y( {\bf 1}_{\Gamma})^{-1} \\
=&\, (-1)^{\deg(n_0)}{\rm det}_{n_0}(\sigma) 
\cdot (-1)\chi_{\Gamma/\Delta}(\sigma) 
\cdot (-1) {\bf 1}_{\Gamma}(\sigma)^{-1}\\
=&\, 1 \cdot 1 \cdot (-1) = -1,	
\end{align*} 
where $\phi^2$ is regarded as a character of $\Gamma/Z$, $\sigma$ 
is the Frobenius element in $\Gamma/\Gamma_{0}$ lifted to $\Gamma$, 
and $n_0:= n({\rm Inf}_{\Gamma/Z}^{\Gamma}({\rm Ind}_{\Delta/Z}^{\Gamma/Z}(\phi^2)))$ 
denotes the unramified part (cf. \cite[Chapter I, (5.6)]{Fr83}) of 
${\rm Inf}_{\Gamma/Z}^{\Gamma}({\rm Ind}_{\Delta/Z}^{\Gamma/Z}(\phi^2))$. 
The third equality above holds since clearly 
${\rm Inf}_{\Gamma/\Delta}^{\Gamma}(\chi_{\Gamma/\Delta})$ and ${\bf 1}_{\Gamma}$ 
are both linear and unramified. The fourth equality follows from the fact 
that $n_0=0$ (since $\phi^2$ is irreducible and ramified, by 
\cite[Chapter III, Proposition 1.3(ii)]{Fr83} the unramified part 
$n({\rm Ind}_{\Delta/Z}^{\Gamma/Z}(\phi^2))=0$ and therefore $n_0=0$).
\end{proof}

The above discussion motivates the following definition.

\begin{definition}  \lab{D:sjac}
We define $J^{*}_{\infty}(L_\pi/L,-) \in \Hom_{\Omega_{\bQ}}(R_G,
J(\bQ^c))$ by its values on $\chi \in \Irr(G)$ as follows:
\[
J^{*}_{\infty}(L_\pi/L, \chi)_v = 
\begin{cases}
\sgn(J^{*}(L_\pi/L, \chi)) \quad &\text{if $\chi \in \Symp(G)$
 and $v|\infty$;}\\ 
1 \quad &\text{otherwise.} 
\end{cases}
\]

We write $J^{*}_{\infty}(L_\pi/L)$ for the element of $K_{0}(\bZ G,
\bQ)$ represented by the homomorphism $J^{*}_{\infty}(L_\pi/L,-)$.
Similarly, we also write $J^{*}(L_\pi/L)$ for the element of
$K_{0}(\bZ G, \bQ)$ represented by $J^{*}(L_\pi/L,-)$.  
\qed
\end{definition}

\begin{theorem} \lab{T:jdet}
We have
\[
J^{*}(L_{\pi}/L,-) \cdot J^{*}_{\infty}(L_{\pi}/L,-)^{-1} \in \Det(\bQ^c
G),
\]
and so
\[
\partial^0(J^{*}(L_\pi/L)) = \partial^0(J^{*}_{\infty}(L_\pi/L)).
\]
\end{theorem}

\begin{proof}
To ease notation, set $f = J^{*}(L_{\pi}/L,-) \cdot
J^{*}_{\infty}(L_{\pi}/L,-)^{-1} $. 

Then, since $f \in
\Hom_{\Omega_{\bQ}}(R_G, (\bQ^{c})^{\times})$ the
Hasse-Schilling-Maass Norm Theorem (cf. \cite[Theorem (7.48)]{curtisr})
implies that the first equality is equivalent to asserting that
$f(\chi)$ is a strictly positive real number for every $\chi$ in
$\Symp(G)$. This in turn follows at once from the definition of
$J^{*}_{\infty}(L_{\pi}/L,-)$.

The second equality is now an immediate consequence of the fact that
$\partial^0(\Det(\bQ^c G)) = 0$.
\end{proof}

Suppose now that $F$ is a number field, and that $[\pi] \in H^1_t(F,
G)$. We also recall that $F_{\pi, v}:= F_{\pi} \otimes_{F} F_{v}
\simeq F_{v, \pi_v}$ (see e.g. \cite[(2.4)]{M87}).

\begin{definition} \lab{D:jclass}
We set
\[
J^{*}(F_\pi/F):= \sum_{v \nmid \infty} J^{*}(F_{v,\pi_{v}}/F_{v})
\in K_0(\bZ G, \bQ),
\]
and
\[
J^{*}_{\infty}(F_\pi/F):= \sum_{v \nmid \infty} J^{*}_{\infty}(F_{v,\pi_{v}}/F_{v})
\in K_0(\bZ G, \bQ).
\]
(Note that the infinite sums make sense as
  $J^{*}_{\infty}(F_{v,\pi_{v}}/F_{v}) = J^{*}(F_{v,\pi_{v}}/F_{v})
  =0$ for all places $v$ that are unramified in $F_{\pi}/F$.)

We define $\cJ^*(F_{\pi}/F) \in \Cl(\bZ G)$ by
\[
\cJ^*(F_{\pi}/F) := \partial^0(J^{*}(F_\pi/F)), \quad
\cJ^*_\infty(F_{\pi}/F) := \partial^0(J^{*}_{\infty}(F_\pi/F))
\]
(see \ref{E:rkes}).
\qed
\end{definition}

\begin{proposition}  \lab{E:jeq}
Suppose that $F$ is a number field, and $[\pi] \in H^1_t(F,G)$. Then
\[
\cJ^*(F_\pi/F) = \cJ^*_{\infty}(F_\pi/F).
\]
\end{proposition} 

\begin{proof}
This is a direct consequence of Theorem \ref{T:jdet} and Definition \ref{D:jclass}.
\end{proof}


\section{Proof of Theorem \ref{T:tboas}} \lab{S:fp}
Let $[\pi] \in H^1_t(F,G)$, and write
\[
\fc(\pi) = [A_{\pi}, O_FG;\br_G] - [O_{\pi}, O_FG; \br_G] \in
K_0(O_FG,F) \subseteq K_0(O_FG, F^c).
\]
For each finite place $v$ of $F$, we write $[\pi_v]$ for the image of $[\pi]$
in $H^{1}_{t}(F_v, G)$.

Recall that 
\[
K_0(O_FG, F) \simeq \frac{\Hom_{\Omega_F}(R_G, J_{f}(F^c))}{\prod_{v
    \nmid \infty} \Det(O_{F_v}G)^{\times}}.
\]
A representing homomorphism in $\Hom_{\Omega_F}(R_G, J_{f}(F^c))$ of
$\fc(\pi)$ is $f = (f_{v})_{v}$ defined by
\[
f_{v}(\chi) = \vpi_{v}^{\langle \psi_{2}(\chi) - 2\chi, s_{v} \rangle_{G}},
\]
using the notation of Corollary \ref{C:rephom}. Let $\Ram(\pi)$ denote
the set of finite places of $F$ at which $F_{\pi}/F$ is ramified. If $v
\notin \Ram(\pi)$, then $s_v =1$ and so $f_v =1$.

\begin{definition} \lab{D:ramdec}
Suppose that $v \in \Ram(\pi)$. Then we define $\fc(\pi;v) \in
K_0(O_FG, F)$ to be the element represented by $f^{(v)} =
(f^{(v)}_{w})_{w} \in \Hom_{\Omega_F}(R_G, J_{f}(F^c))$ given by
\[
f^{(v)}_{w}(\chi) =
\begin{cases}
f_v(\chi) = \vpi_{v}^{\langle \psi_{2}(\chi) - 2\chi, s_{v} \rangle_{G}}
&\text{if $w=v$}; \\
1 &\text{otherwise.}
\end{cases}
\]
\end{definition}

\begin{lemma} \lab{L:ramdec}
We have
\be \lab{E:ramdec}
\fc(\pi) = \sum_{v \in \Ram(\pi)} \fc(\pi; v).
\ee
\end{lemma}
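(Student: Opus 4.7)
The plan is to unwind the definitions and observe that the equality is essentially tautological once we pass to the idelic description of $K_0(O_FG, F)$ given by Theorem \ref{T:kdes}.

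First I would recall from Corollary \ref{C:rephom} and Remark \ref{R:kof} that $\fc(\pi)$ is represented in
\[
\frac{\Hom_{\Omega_F}(R_G, J_f(F^c))}{\prod_{v \nmid \infty} \Det(O_{F_v}G)^\times}
\]
by the idele $f = (f_v)_v$ with $f_v(\chi) = \vpi_v^{\langle \psi_2(\chi) - 2\chi,\, s_v\rangle_G}$, where $s_v = \pi_v(\sigma)$. The next step is to note that if $v \notin \Ram(\pi)$, then $L_{\pi_v}/L_v$ is unramified, so $s_v = 1$, and consequently $\langle \psi_2(\chi) - 2\chi, 1\rangle_G = 0$ for every $\chi \in R_G$. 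Thus $f_v \equiv 1$ at all such places, and the idele $f$ is supported on the finite set $\Ram(\pi)$.

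Next I would observe that by construction the idele $f^{(v)}$ representing $\fc(\pi; v)$ agrees with $f$ at the place $v$ and is trivial elsewhere. Consequently, as ideles in $\Hom_{\Omega_F}(R_G, J_f(F^c))$ we have the pointwise identity
\[
\prod_{v \in \Ram(\pi)} f^{(v)} = f,
\]
since at each finite place $w$ only the factor $v = w$ contributes (and contributes nothing if $w \notin \Ram(\pi)$). Passing this multiplicative equality through the isomorphism of Theorem \ref{T:kdes}(b), which sends multiplication of representing ideles to addition in $K_0(O_FG, F)$, yields the desired identity
\[
\fc(\pi) = \sum_{v \in \Ram(\pi)} \fc(\pi; v).
\]

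There is no substantial obstacle here; the only mild bookkeeping point is to verify that $f^{(v)}$ really is $\Omega_F$-equivariant (so that it does represent an element of $K_0(O_FG, F)$), which follows from the fact that $\Omega_F$ permutes places of $F^c$ above a given place of $F$, combined with the $\Omega_F$-equivariance of the original idele $f$ concentrated at $v$.
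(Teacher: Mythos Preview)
Your proposal is correct and follows essentially the same approach as the paper: both argue that $f = \prod_{v \in \Ram(\pi)} f^{(v)}$ as ideles (using that $f_v = 1$ for $v \notin \Ram(\pi)$, which the paper records just before Definition~\ref{D:ramdec}), and then pass to $K_0(O_FG, F)$. Your version is simply more explicit; the paper's proof is the one-line observation that this product identity holds by definition.
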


\begin{proof}
It follows from the definitions that
\[
f = \prod_{v \in \Ram(\pi)} f^{(v)},
\]
and this implies the result.
\end{proof}


We can now prove Theorem \ref{T:tboas}.

\begin{theorem} \lab{T:main}
Suppose that $[\pi] \in H^{1}_{t}(F, G)$ and that $A_{\pi}$ is
defined. Then
\[
\partial^{0}(\cN_{F/\bQ}(\fc(\pi)) \cdot \cJ^{*}_{\infty}(F_\pi/F)^{-1} = 0, 
\]
and so there is an equality
\[
(A_{\pi}) - (O_{\pi}) = \cJ^{*}_{\infty}(F_\pi/F),
\]
i.e. (see \eqref{E:mjt})

\[
(A_{\pi}) - W(F_\pi/F) = \cJ^{*}_{\infty}(F_\pi/F),
\]
in $\Cl(\bZ G)$.
\end{theorem}

\begin{proof}
Lemma \ref{L:ramdec} implies that in order to show that
\[
\partial^{0}(\cN_{F/\bQ}(\fc(\pi)) \cdot \cJ^{*}_{\infty}(F_\pi/F)^{-1}
  = 0,
\]
 it suffices to show that 
\[
\partial^{0}(\cN_{F/\bQ}(\fc(\pi;v))
  \cdot \cJ^{*}_{\infty}(F_{v,\pi_{v}}/F_v)^{-1}= 0
\]
for each $v \in \Ram(\pi)$. Theorem \ref{T:jdet} implies that this
is equivalent to showing that 
\[
\partial^{0}(\cN_{F/\bQ}(\fc(\pi;v))
\cdot \cJ^*(F_{v,\pi_{v}}/F_v)^{-1}= 0
\]
for each $v \in \Ram(\pi)$.

We see from the description of ${\rm Cl}(\bZ G)$ given in Theorem
\ref{T:kdes}(a) that this last equality will in turn follow if, for each $v \in
\Ram(\pi)$, we show that
\[
J^{*}(F_{v,\pi_{v}}/F_{v},-)^{-1}  \cdot
(\cN_{F/\bQ}(f^{(v)})) \in \prod_{l} \Det(\bZ_{l}G)^{\times}.
\]

To show this last inclusion, we first observe that Proposition~\ref{P:jkill}(a)
implies that the inclusion holds at all rational primes $l$ not lying
below $v$.

For each rational prime $l$ that lies below $v$, we fix an embedding
$\Loc_l: \bQ^c \to \bQ^c_{l}$ and use it to identify $\Irr(\Gamma)$
with $\Irr_l(\Gamma)$. We recall in particular that such an
isomorphism $R_G \to R_{G, l}$ in turn induces an isomorphism
$\Hom_{\Omega_{F}}(R_G, (\bQ^c)^{\times}_{l}) \to
\Hom_{\Omega_{F_{v}}}(R_{G, l}, (\bQ^c_l)^{\times})$ 
(cf. \cite[Chapter II, Lemma 2.1]{Fr83}).  
Then, reasoning analogously to the proof of
\cite[Theorem 19, pp. 114--116]{Fr83}, 
 one can deduce from Proposition \ref{P:jkill}(b) that
\[ 
\cN_{F_{v}/\bQ_{l}}(f_{v}) \cdot  \Loc_l \bigl(\cN_{F/\bQ}(f^{(v)}) \bigr)^{-1} \in \Det(\bZ_{l}G).
\]
This establishes the desired inclusion at rational primes lying below
$v$ and completes the proof of the desired result.

		

		
\end{proof}

\begin{remark}
Let us make some remarks concerning Theorem \ref{T:main} when
$F_{\pi}/F$ is locally abelian.

Suppose that $v \in \Ram(\pi)$. Set $s_v:= \pi(\sigma_v)$, and write
$H_v := \langle s_v \rangle$. Proposition \ref{P:gd}(d) with $G = H_v$
and Proposition \ref{P:adams}(b) imply that for each $\chi \in
R_{H_v}$, we have
\begin{align*}
\langle \chi, s_v \rangle^{*}_{H_v} - \langle \chi, s_v \rangle_{H_v} &=
(d(s_v), \chi)_{H_v} \\
&= \langle \psi_2(\chi) - \chi, s_v \rangle_{H_v}.
\end{align*}

Now suppose also that $F_v$ contains a primitive $|s_v|$-th root of
unity. This implies in particular that the extension
  $F^{\pi_{v}}_{v}/F_v$ is abelian. Let $\fb(\pi;v) \in K_0(FH_{v},
F)$ be the element represented by $\rho^{(v)} = (\rho^{(v)}_{w})_{w}
\in \Hom_{\Omega_{F}}(R_{H_v}, J_{f}(F^c))$ defined by
\[
\rho^{(v)}_{w}(\chi) =
\begin{cases}
\vpi_{v}^{(d(s_{v}), \chi)_{H_{v}}} =
\vpi_{v}^{\langle \psi_{2}(\chi) - \chi, s_{v} \rangle_{H_v}}
&\text{if $w=v$}; \\
1 &\text{otherwise}
\end{cases}
\]
Observe that without the hypothesis concerning the number of roots of
unity in $F_v$, we would only have that $\rho^{(v)} \in \Hom(R_{H_v},
J_{f}(F^c))$ rather than $\rho^{(v)} \in \Hom_{\Omega_{F}}(R_{H_v},
J_{f}(F^c))$. We also see from the definitions of $\fc(\pi;v)$ and
$\fb(\pi;v)$ (see also \eqref{E:indeq1} and \eqref{E:indeq3}) that
$\fc(\pi;v) = \Ind^{G}_{H_v} \fb(\pi;v)$.

Hence if for every $v \in \Ram(\pi)$, $F_v$ contains
a primitive $|s_v|$-th root of unity--which is precisely what happens if
$F_{\pi}/F$ is locally abelian--then we have
\be \lab{E:locab}
\fc(\pi) = \sum_{v \in \Ram(\pi)} \Ind^{G}_{H_v} \fb(\pi;v),
\ee
and so (using \eqref{E:indcd})
\begin{align*}
\partial^0(\fc(\pi)) &= \sum_{v \in \Ram(\pi)}
\partial^0(\Ind^{G}_{H_v} \fb(\pi;v)) \\
&= \sum_{v \in \Ram(\pi)}
\Ind^{G}_{H_v} \partial^0(\fb(\pi;v)) \\
&=0.
\end{align*}
We now deduce from Theorem \ref{T:main} that $\cJ^*_{\infty}(F_\pi/F)
= 0$.

A comparison of \eqref{E:locab} and \eqref{E:ramdec} highlights the
crucial difference between the locally abelian case and the general
case. In both cases, the class $\fc(\pi)$ may be decomposed into a sum
over the places $v \in \Ram(\pi)$ of classes $\fc(\pi;v) \in K_0(O_FG,
F^c)$. However, in the locally abelian case, these classes
$\fc(\pi;v)$ are induced from cyclic subgroups of $G$, while in the
general case they are not. This is why Theorem \ref{T:main} may be
proved in the locally abelian case using abelian Jacobi sums thereby
showing that in this situation $\cJ^*_\infty(F_\pi/F) = 0$), which is
what is done in \cite{CV16}.  \qed
\end{remark}


\section{Proof of Theorem \ref{T:ce}} \lab{S:ce}


Let $F$ be any imaginary quadratic field such that $\Cl(O_F)$ contains
an element of order $4$. In this section we shall construct infinitely
many counterexamples to Conjecture \ref{C:boas} by showing that if
$\ell$ is any sufficiently large prime with $\ell \equiv 3 \pmod{4}$
and $G$ is the generalised quaternion group $H_{4\ell}$, then there
are infinitely many tame $G$-extensions $F_{\pi}/F$ of fields such
that $A_\pi$ exists and $\cJ^*_\infty(F_\pi/F) \neq 0$. Hence, for
these extensions, $(O_\pi) \neq (A_\pi)$ in $\Cl(\bZ G)$. This will
prove Theorem \ref{T:ce}.


In what follows we fix an imaginary quadratic field $F$ such that
$\Cl(O_F)$ contains an element of order $4$. To prove
Theorem~\ref{T:ce}, it will suffice to prove the following result,
which we shall derive as a consequence of works of Fr\"ohlich (see
\cite{Fr74}).

\begin{lemma}\lab{l:extn}
Suppose that $\ell$ is a sufficiently large prime
and that $G\simeq H_{4\ell}$. Then, there exists a $G$-extension
$F_\pi/F$ of fields such that:
\begin{itemize}
\item[(a)] $F_\pi/F$ is ramified at only a single prime $\fp$ of $F$
  with $\fp\nmid \ell$;
\item[(b)] The prime $\fp$ does not split in $F_{\pi}/F$;
\item[(c)] The ramification index of $\fp$ is equal to $\ell$;
\end{itemize}
\end{lemma}

Before we prove this result, we shall first show that Lemma~\ref{l:extn} 
implies Theorem~\ref{T:ce}. \\

\begin{proof}[{Proof of Theorem~\ref{T:ce}}]
First we note that the decomposition subgroup of $G$ at $\fp$ is equal to
$H_{4\ell}$.  We also recall that, for an odd prime $\ell$, the
generalised quaternion group $H_{4\ell}$ has a single, irreducible,
non-trivial symplectic character $\chi$, say.  

If $\fq$ is unramified in $F_\pi/F$, then one has $\sgn(y(F_{\pi,
  \fq}/F_{\fq},\psi_{2}(\chi)))=1$. On the other hand,
Theorem~\ref{T:sgn} and Proposition~\ref{P:neg} imply that
\[ 
\sgn(J^{\ast}(F_{\pi, \fp}/F_{\fp}, \chi)) 
= \sgn(y(F_{\pi, \fp}/F_{\fp},\psi_{2}(\chi))) = -1.  
\]
In particular, if we now assume in addition that $\ell \equiv 3 \pmod{4}$, then it follows
from \cite[Chapter II, Proposition 4.4]{Fr83} that the element
$\cJ^*_\infty(F_\pi/F) \in \Cl(\bZ G)$ (see Definition~\ref{D:sjac}
and \ref{D:jclass}, and Proposition~\ref{E:jeq}) is non-trivial. (We
remark in passing that if instead $\ell \equiv 1 \pmod{4}$, then the
same argument shows that $\cJ^*_\infty(F_\pi/F) = 0$.)
\end{proof}

The remainder of this section will be devoted to the construction of
the extensions described in Lemma \ref{l:extn}.



Let $L$ be an unramified,  cyclic extension of $F$ of degree $4$.  We
write $E/F$ for the quadratic subextension of $L/F$ and write
$\varphi_{E/F}$ for the quadratic character of $E/F$ on ideals
of $F$.  We also view this as an idele class character of $F$. If
$\omega$ denotes the idele class character of $E$ that cuts out the
extension $L/E$, then $\omega$ is of quaternion type (i.e. the
restriction of $\omega$ to $J(F)$ is equal to
$\varphi_{E/F}$---see \cite[p. 405]{Fr74}.)

For each prime $\ell$, the symbol $\eta_\ell$ will denote a primitive
$\ell$-th root of unity.  Then, following \cite[Theorem 4]{Fr74}, we
consider the following conditions on primes.

\begin{condition}\label{cd:ell}
Let $\ell$ be an odd prime such that:
\begin{itemize}
\item[(a)]	$[F(\eta_\ell) : F]$ is even;
\item[(b)] $E\not\subseteq F(\eta_\ell +\eta_\ell^{-1})$;
\item[(c)] the class number of $E$ is not divisible by $\ell$.
\end{itemize}
\end{condition}
We remark that these properties are satisfied for all sufficiently
large $\ell$. (We observe, in particular, that in our case
\ref{cd:ell}(b) is automatically satisfied for sufficiently large
$\ell$ since $E/F$ is unramified.)

Henceforth we therefore fix a prime $\ell$ satisfying \ref{cd:ell} and
abbreviate $\eta_\ell$ to $\eta$.  We then write $\Sigma_{-}$ for the
set of primes $\fp$ of $F$ satisfying the following properties (see
\cite[(8.5)]{Fr74}).

\begin{condition}\label{cd:prime}
Let $\fp$ be a finite prime of $F$ such that:
\begin{itemize}
\item[(a)] The prime $\fp$ is inert in $E/F$ (ie. $\varphi_{E/F}(\fp) = -1$);
\item[(b)] $N_{F/\bQ} \equiv -1 {\pmod \ell}$.
\end{itemize}
\end{condition}

In what follows, if $\fp \in \Sigma_{-}$, we write $\fp_{E}$ for the
unique prime of $E$ lying above $\fp$.

Our argument relies on the following result of Fr\"ohlich (see
\cite[pp. 432--434]{Fr74}). We state the result, and then describe an
outline of the proof. We refer the reader to \cite{Fr74} for complete
details. 

\begin{theorem}\label{T:ali}
There are infinitely many primes in $\Sigma_{-}$ (in fact a
subset of positive Chebotarev density) for which the following
statement is true: there exists a non-trivial idele class character $\theta$ of $E$ of
order $\ell$, and of dihedral type (i.e. the restriction of $\theta$
to $J(F)$ is trivial) which is ramified at $\fp_{E}$ and which is
unramified at all other finite places of $E$.
\end{theorem} 

\begin{proof}
We remark that necessary conditions for such a $\theta$ to exist are
given in \cite[Section 8, Lemma 5]{Fr74}. The existence of
$\theta$ is demonstrated on pp. 433--434 of loc.cit. via the following
argument. 

Recall that $\eta$ is a primitive $\ell$-th root of unity, and set
\[
M:= E(\eta).
\]
(Note that this field is denoted by $L$ in \cite[p. 433, l. 9]{Fr74},
which is an unfortunate clash of notation with the field $L$ defined
earlier in loc. cit. (see \cite[p. 407]{Fr74}).

Write $\wt{M}$ for the extension of $M$ obtained by adjoining the
elements 
\[
\{ y^{1/\ell} \mid y \in O^{\times}_{E} \}.
\]
It is shown in loc. cit. that, for each prime $\fp$ of $F$ satisfying
the following Frobenius conditons, there exists an idele class
character $\theta$ of $E$ satisfying the properties we seek:

\begin{condition}\label{cd:frob}
For every prime $\mathfrak{P}$ of $\widetilde{M}$ lying above $\fp$,
the Frobenius element $\delta=(\mathfrak{P}, \widetilde{M}/F)$
satisfies:
\begin{itemize}
\item[(F1)] $\delta^2=1$;
\item[(F2)] $\delta |_{E}$ is non-trivial 
(so $\fp$ does not split in $E/F$);
\item[(F3)] $\delta \mid_{F(\eta)}$ is non-trivial 
(so $\fp$ satisfies Property~\ref{cd:prime}(b) above).
\end{itemize}
\end{condition}

The set of primes $\fp$ of $F$ satisfying Property~\ref{cd:frob} 
has positive Chebotarev density, and all such primes lie in $\Sigma_{-}$.
\end{proof}

Let $\theta$ be an idele class character of $E$ as constructed in
Theorem \ref{T:ali}, and let $N/E$ denote the extension cut out by
$\theta$. Then $N/E$ is cyclic of order $\ell$, ramified (necessarily
totally) at $\fp_E$, and at no other primes of $E$. As $\theta$ is of
dihedral type, the extension $N/F$ is dihedral of order $2\ell$.

Set $\psi:= \omega \theta$. Then $\psi$ is an idele class character of
$E$ of quaternion type, and we deduce that $F_{\pi(\psi)}:= NL$ is an
$H_{4\ell}$ extension of $F$. (Note that the field that we call
$F_{\pi(\psi)}$ is denoted by the symbol $F_\psi$ in \cite{Fr74}.) The
extension $F_{\pi(\psi)}/F$ is ramified only at $\fp$, with
ramification index $\ell$. We have the following diagram of fields and
corresponding idele class characters (where we write $\vphi$ for
$\vphi_{E/F}$):

\begin{center}
\begin{tikzpicture}[node distance = 1.5cm, auto]
\node (F) {$F$};
\node (E) [above of=F] {$E$};
\node (L) [above of=E, right of=E] {$L$};
\node (N) [above of=E, left of=E] {$N$};
\node (NL) [above of=E, node distance = 3cm] {$F_{\pi(\psi)}=NL$};
			
\node (M) [right of=E, node distance = 2cm] {$M=E(\eta)$};
\node (M1) [right of=L, node distance = 2cm] {$\widetilde{M}$};
			
\draw[-] (F) to node [swap]{$\varphi$}(E);
\draw[-] (E) to node {$\omega$} (L);
\draw[-] (E) to node [swap]{$\theta$} (N);
\draw[-] (E) to node {}(M);
\draw[-] (F) to node {}(N);
\draw[-] (L) to node {} (NL);
\draw[-] (N) to node {}(NL);
\draw[-] (F) to node {}(M);
\draw[-] (M) to node {}(M1);
\draw[-] (E) to node {}(M1);
\end{tikzpicture}
\end{center}

To complete the proof of Lemma \ref{l:extn}, it suffices to show that,
in Theorem \ref{T:ali}, there are infinitely many choices of $\fp$
(and so of $\theta$) such that the decomposition group of $\fp$ in
$F_{\pi(\psi)}/F$ is not abelian. This is equivalent to imposing an
additional Frobenius condition on $\fp$. In order to do this, we
require the following lemma.

\begin{lemma}\label{l:extn lemma}
The extension $\widetilde{M}/E$ and $L/E$ are linearly disjoint. Hence
$[\wt{M}L:\wt{M}] = 2$.
\end{lemma}

\begin{proof}
The extension $\widetilde{M}/E$ has a unique quadratic sub-extension,
viz. the unique quadratic sub-extension of $M/E$ (recall that $M =
E(\eta)$). This extension is ramified at places above $\fp$, and so
cannot be equal to the unramified quadratic extension $L/E$.
\end{proof}

We now fix an element $\delta_1 \in \Gal(\wt{M}L/F)$ which maps under
the obvious quotient map onto the element $\delta \in \Gal(\wt{M}/F)$
constructed in the proof of Theorem \ref{T:ali} (see \eqref{cd:frob}),
and we consider the set of primes $\fp$ of $F$ satisfying the
following Frobenius condition:

\begin{condition}\label{cd:frob4}
For every prime $\mathfrak{Q}$ of $\widetilde{M}L$ lying
above $\fp$, 
\begin{itemize}
\item[(F4)] the Frobenius element $(\mathfrak{Q},
  \widetilde{M}L/F)$ lies in the conjugacy class of $\delta_1$.
\end{itemize}
\end{condition}

The set of primes $\fp$ satisfying \eqref{cd:frob4} has positive
Chebotarev density, and plainly if $\fp$ satisfies \eqref{cd:frob4},
then it also satisfies \eqref{cd:frob}. 

Suppose that $\fp$ satisfies \eqref{cd:frob4}. Then the corresponding
extension $F_{\pi(\psi)}/F$ constructed above is an
$H_{4\ell}$-extension unramified outside $\fp$, in which $\fp$ is
non-split and ramified, with ramification index $\ell$. Hence
$F_{\pi(\psi)}/F$ an extension satistisfying the conditions of
Lemma~\ref{l:extn}.

This completes the proof of Lemma \ref{l:extn}.

\begin{remark}
It is shown in \cite[Theorem 4]{Fr74} that for the extensions
$F_{\pi(\psi)}/F$ constructed above satisfying the conditions of Lemma
\ref{l:extn}, we have
\[
W(F_{\pi(\psi)}/F) = \varphi_{E/F}(\fp) = -1.
\]
This implies that $(O_{\pi(\psi)})\neq 0$ (see \eqref{E:mjt}), and
  so, since $\cJ^*_\infty(F_{\pi(\psi)}/F) \neq 0$, it follows from
  Theorem \ref{T:tboas} that $(A_{\pi(\psi)}) = 0$. \qed
\end{remark}

\begin{remark} \lab{R:db}
Dominik Bullach has explained to us how explicit counterexamples to
Conjecture \ref{C:boas} can also be derived from Theorem \ref{T:tboas}
by using general results of Neukirch on the embedding problem (see
\cite{N73}) rather than the explicit computations of Fr\"ohlich in
\cite{Fr74}.  
\qed
\end{remark}

\end{document}